\newcommand{\Sym}{\mathbb{S}}
\newcommand{\col}[4]{ {\rm Col}_{#1}^{#2,#3}{(#4)}}
\newcommand\Z{{\mathbb Z}}
\newtheorem{theorem}{Theorem}[section]
\newtheorem{corollary}[theorem]{Corollary}
\newtheorem{lemma}[theorem]{Lemma}
\newtheorem{proposition}[theorem]{Proposition}
\newtheorem{definition}[theorem]{Definition}
\newtheorem{example}[theorem]{Example}
\newtheorem{remark}[theorem]{Remark}
\newtheorem{conjecture}[theorem]{Conjecture}
\begin{document}

\title{Computation of 
Quandle 2-Cocycle Knot Invariants Without Explicit 2-Cocycles}

\author{W. Edwin Clark, \  Larry A. Dunning, \ Masahico  Saito \\
Department of Mathematics and Statistics\\ University of South Florida
}

\date{\empty}

\maketitle

\begin{abstract}
We explore a knot invariant derived from  colorings of corresponding $1$-tangles with arbitrary connected quandles. When the quandle is an abelian extension of a certain type the invariant is 
equivalent to the quandle $2$-cocycle invariant. 
We  construct many such abelian extensions using generalized Alexander quandles without explicitly finding $2$-cocycles. This permits the construction of many $2$-cocycle invariants without exhibiting explicit $2$-cocycles. We show that for connected generalized Alexander quandles the invariant is equivalent to Eisermann's knot coloring polynomial.
Computations using this technique show that the $2$-cocycle  invariant distinguishes all of the oriented prime knots up to 11 crossings and most oriented prime knots with 12 crosssings including classification by symmetry: 
 mirror images, reversals, and reversed mirrors. 
\\[5mm]
Key words: quandles, knot colorings, tangles,  quandle cocycle invariants, abelian extensions of quandles \\
MSC: 57M25
\end{abstract}

\section{Introduction}\label{sec:intro}

Sets with certain self-distributive operations called  {\it quandles}
have been extensively used in knot theory. 
The {\it fundamental quandle} of a knot
was defined  \cite{Joyce,Mat} in a manner similar to the
fundamental group of a knot. The number of homomorphisms from the fundamental
quandle to a fixed finite quandle has an interpretation as colorings
of knot diagrams by quandle elements, and has been widely used as a
knot invariant. Algebraic homology theories for quandles 
were defined \cite{CJKLS,FRS1} and
 investigated. 
 Extensions of quandles by cocycles have been studied \cite{AG,CENS,Eis3}, and
 invariants constructed from quandle cocycles 
 are applied to various properties of knots and knotted surfaces (see \cite{CKS}
and references therein). 

In this paper all knots will be oriented. Let   $r(K)$ be  the knot $K$ with orientation reversed and $m(K)$ be the mirror image of $K$.
Let $\mathscr{G} = \{1, r, m, rm \}$ be the group of four symmetries of the set of isotopy classes of oriented knots generated by $r$ and $m$.
In \cite{CESY} it was shown that  there are 26 quandles that suffice to distinguish modulo the group $\mathscr{G}$ all pairs of prime knots with crossing number at most 12 using the number ${\rm Col}_Q(K)$ of colorings of a knot diagram $K$ by the quandle $Q$.
It is known that ${\rm Col}_Q(K) = {\rm Col}_Q(rm(K))$ for any quandle $Q$, but quandle $2$-cocycle invariants do distinguish $rm(K)$ from $K$ 
for some quandles and for some knots. Thus we pose the following (notation will be specified in Section~\ref{sec:prelim}):

\begin{conjecture} \label{conjecture1} 
The 2-cocycle invariant $\Phi_\phi$ is a complete invariant for oriented knots, that is, 
if $K_1$ and $K_2$ are non-isotopic oriented knots, then 
there exists a finite quandle $Q$ and a $2$-cocycle $\phi$ such that $\Phi_\phi(K_1) \neq \Phi_\phi(K_2)$.
\end{conjecture}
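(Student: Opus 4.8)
The plan is to reduce the conjecture to the completeness of the \emph{oriented peripheral system} and then to confront the problem of approximating this infinite datum by finite quandles. First I would make precise the sense in which the family $\{\Phi_\phi\}$, ranging over all finite connected quandles $Q$ and all $2$-cocycles $\phi$, refines the coloring number ${\rm Col}_Q$. A coloring of a diagram of $K$ is a quandle homomorphism from the fundamental quandle $Q(K)$, so ${\rm Col}_Q$ depends only on the isomorphism type of $Q(K)$ and hence cannot separate $K$ from $rm(K)$. By the equivalences assumed from the abstract --- that $\Phi_\phi$ is computed from colorings of the associated $1$-tangle in an abelian extension $\widetilde Q\to Q$, and that for connected generalized Alexander quandles it coincides with Eisermann's knot coloring polynomial --- the invariant $\Phi_\phi$ records, for each coloring, the way the preferred longitude acts on the fibre over the meridian colour. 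Thus the first step is to establish precisely what $\Phi_\phi$ sees: across all finite $Q$ and all $\phi$, the family $\{\Phi_\phi\}$ encodes the longitude action on meridian--fibres in every finite quandle quotient, that is, the finite-quotient shadows of the \emph{pointed} knot quandle $(Q(K),\text{meridian},\text{longitude})$.

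Second, I would import the classical completeness of the oriented peripheral system. By Gordon--Luecke a knot in $S^3$ is determined by its complement, and by Waldhausen's rigidity for Haken manifolds, together with Whitten's analysis of peripheral structures, the complement equipped with its oriented meridian and longitude determines the oriented knot type. The results of Joyce and Matveev \cite{Joyce,Mat} identify the fundamental quandle with the knot group decorated by a meridian, so that retaining the longitude orientation recovers the full oriented peripheral system. Combining these, the pointed knot quandle is a complete invariant of oriented knots; in particular the $rm$-ambiguity of ${\rm Col}_Q$ disappears exactly once the longitude datum is kept, and that is precisely the datum $\Phi_\phi$ was designed to detect.

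The remaining --- and decisive --- step is a finite-approximation argument, and this is where I expect the real difficulty to lie. The fundamental quandle and the knot group are infinite, whereas each $\Phi_\phi$ sees only a finite quotient. Given non-isotopic oriented $K_1,K_2$, the second step produces an element of the peripheral data (morally the image of a longitude) on which they differ; one must then realise this difference in some finite quandle colouring and detect it through an explicit cocycle weight. Knot groups are residually finite, so any fixed nontrivial peripheral element survives in a finite quotient, and the abelian-extension construction of the paper converts such a survivor into a nonzero end-colour defect, that is, a $2$-cocycle weight. The obstacle is \emph{uniformity}: residual finiteness separates elements inside a single group, but here one must simultaneously compare colourings of two distinct knot quandles and control the cocycle weights for all pairs of knots at once. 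Producing, for every pair of distinct oriented knots, a single finite quandle and $2$-cocycle whose invariants differ --- rather than merely separating elements within one fixed group --- is exactly the gap that keeps the statement a conjecture, and I would expect closing it to require genuinely new input beyond residual finiteness, perhaps a profinite completeness theorem for the oriented peripheral system realised through finite quandle quotients.
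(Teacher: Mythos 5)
The statement you were asked to prove is labelled a \emph{conjecture} in the paper, and the paper contains no proof of it: the authors only verify it computationally for prime knots with at most $12$ crossings (and even there $13$ knots resist their $60$ quandles for the reversal symmetry). So there is no ``paper's own proof'' to compare against, and your proposal, which openly declares itself incomplete, is correctly calibrated: you have not proved the statement, and neither has anyone else.

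That said, your reduction is essentially the same heuristic the authors themselves invoke. Your first two steps are sound and match the paper's framing: colorings factor through the fundamental quandle, which is blind to $rm$; the $2$-cocycle invariant, via the $1$-tangle/abelian-extension picture and the equivalence with Eisermann's coloring polynomial (Theorem~\ref{Psi=P}), records the longitude's action on the fibre over the meridian colour; and Gordon--Luecke plus Waldhausen make the oriented peripheral system a complete invariant. The paper's remark following Conjecture~\ref{conjecture2} cites exactly this circle of ideas (Eisermann's Question~1.5, peripheral systems, residual finiteness) as the \emph{motivation} for the conjecture, not as a proof. The gap you name is the genuine one: residual finiteness separates a nontrivial element from the identity \emph{within one group}, but distinguishing two non-isotopic knots requires a single finite pointed quotient in which the two homomorphism counts (weighted by longitude images) differ, and non-isomorphic residually finite groups can share all finite quotients. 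This is a profinite-rigidity question for the oriented peripheral system, and no known result closes it. One small caution on your step three: even granting that some finite quotient ``sees'' the difference in peripheral data, converting that into an inequality of the \emph{sums} $\Phi_\phi(K_1)\neq\Phi_\phi(K_2)$ requires controlling cancellation among all colorings, which is a further obstruction you gesture at but do not isolate. Your proposal is an accurate map of why the statement remains open rather than an argument that it is true.
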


In this paper we examine this conjecture computationally for prime knots with at most 12 crossings using  colorings of $1$-tangles
to compute $2$-cocycle invariants without explicitly finding $2$-cocycles. Since ${\rm Col}_Q(K)$ is equal to the $2$-cocycle invariant for a coefficient group of order 1, it follows from the results of \cite{CESY} that the conjecture holds when $K_1$ and $K_2$ are prime knots with crossing number at most 12 that lie in distinct $\mathscr{G}$-orbits. Thus we need only consider the cases where  $K_1$ and $K_2$ lie in the same $\mathscr{G}$-orbit. For this purpose, effective ways of constructing abelian extensions by generalized Alexander quandles 
are studied, and a set of 60 quandles is presented that 
distinguishes all distinct $K_1$ and $K_2$ in the same $\mathscr{G}$-orbit
for all except 13 knots with up to 12 crossings. 

In Section~\ref{sec:prelim}, preliminary material is reviewed.
The invariant using colorings of $1$-tangles is defined in Section~\ref{sec:def}, and its relations to other invariants are 
given in Section~\ref{sec:rel}. 
Non-faithful quandles play a key role in the invariant, and they are discussed in Section~\ref{sec:nonfaith}. 
In Section~\ref{sec:compute}, computational outcomes are discussed.
Theorems needed for the construction of quandles used in this paper are proved in appendices.

\section{Preliminaries} \label{sec:prelim}

In this section we briefly review some definitions and examples. 
More details can be found, for example, in \cite{CKS}. 

A {\it quandle} $Q$ is a set with a binary operation $(a, b) \mapsto a * b$
satisfying the following conditions.
\begin{eqnarray}
\mbox{\rm (Idempotency) } & &  \mbox{\rm  For any $a \in Q$,
$a* a =a$.} \label{axiom1} \\
\mbox{\rm (Right invertibility)}& & \mbox{\rm For any $b,c \in Q$, there is a unique $a \in Q$ such that 
$ a*b=c$.} \label{axiom2} \\
\mbox{\rm (Right self-distributivity)} & & 
\mbox{\rm For any $a,b,c \in Q$, we have
$ (a*b)*c=(a*c)*(b*c). $} \label{axiom3} 
\end{eqnarray}
   
  Let $Q$ be a quandle.
  The {\it right translation}  ${R}_a:Q\rightarrow  Q$, by $a \in Q$, is defined
by ${ R}_a(x) = x*a$ for $x \in Q$. 
Then ${ R}_a$ is an automorphism of $Q$ by Axioms (2) and (3). 
The subgroup of ${\rm Sym}(Q)$ generated by the permutations ${ R}_a$, $a \in Q$, is 
called the {\it {\rm inner} automorphism group} of $Q$,  and is 
denoted by ${\rm Inn}(Q)$. 
The map ${\rm inn}: Q \rightarrow {\rm inn}(Q) \subset {\rm Inn}(Q)$
defined by ${\rm inn}(x)=R_x$ is called the {\it inner representation}.

A quandle is {\it connected} if ${\rm Inn}(Q)$ acts transitively on $Q$.
A quandle is {\it faithful} if the mapping ${\rm inn}: Q \rightarrow  {\rm Inn}(Q)$ is an injection.

As in Joyce \cite{Joyce},
given a group $G$ and   $f \in {\rm Aut}(G)$,
one can define a quandle operation on $G$ by 
$x*y=f(xy^{-1}) y ,$ $ x,y \in G$.  
We call such a quandle a {\it generalized Alexander quandle} and denote it by  ${\rm GAlex}(G,f)$.
If $G$ is abelian, such a quandle is  known as an {\it Alexander quandle}.

If  $H$ is a subgroup of ${\rm Fix}(G,f)$, then ${\mathscr H}(G,H,f) $ is  the quandle  with underlying set $\{Hg: g \in G\}$ with product
given by $Ha*Hb = Hf(ab^{-1})b$. (We use ${\mathscr H}$ for a {\it homogeneous} quandle, instead of 
${\mathscr Q}_{\rm Hom}$ used in \cite{HSV}. This construction has been used by various authors under different names.) Clearly we have ${\rm GAlex}(G, f)={\mathscr H}(G,\{1\},f)$.

 A {\it quandle homomorphism} between two quandles $X, Y$ is
 a map $f: X \rightarrow Y$ such that $f(x*_X y)=f(x) *_Y f(y) $, where
 $*_X$ and $*_Y$ 
 denote 
 the quandle operations of $X$ and $Y$, respectively.
 A {\it quandle isomorphism} is a bijective quandle homomorphism, and 
 two quandles are {\it isomorphic} if there is a quandle isomorphism 
 between them.
 A quandle epimorphism $f: X \rightarrow Y$ is a {\it covering}~\cite{Eis3}
 if $f(x)=f(y)$ implies $a*x=a*y$ for all $a, x, y \in X$. 
 An inner representation is a covering.
We say that two homomorphisms $f: X \rightarrow Y$ and $g:Z \rightarrow W$ are {\it equivalent }
if there are isomorphisms $i: X \rightarrow Z$ and $j:Y \rightarrow W$ such that  the following diagram is commutative.

\[ \begin{tikzcd}
X \arrow{r}{f} \arrow[swap]{d}{i} & Y \arrow{d}{j} \\
Z \arrow{r}{g}& W
\end{tikzcd}
\]

Let $D$ be a diagram of a knot $K$, and ${\cal A}(D)$ be the set of arcs of $D$.
A {\it  coloring}  of a knot diagram $D$ by a quandle $Q$
is a map $C: {\cal A}(D) \rightarrow Q$  satisfying the condition depicted in Figure~\ref{coloredXing}
at every 
 positive (left) and negative (right) crossing $\tau$,
respectively.  The pair $(x_\tau, y_\tau)$ of colors assigned to a pair of nearby arcs of a crossing $\tau$
is called the {\it source} colors, and the third arc is required to receive the color $x_\tau * y_\tau$.

\begin{figure}[htb]
    \begin{center}
   \includegraphics[width=3in]{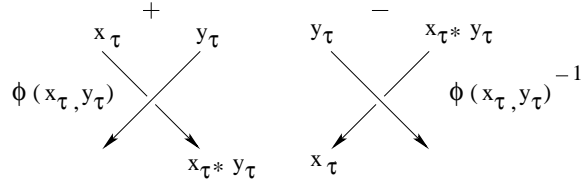}\\
    \caption{Colored crossings and cocycle weights }\label{coloredXing}
    \end{center}
\end{figure}

A  $1$-{\it tangle} (also called a long knot) is a properly embedded arc in a $3$-ball, and the equivalence of
$1$-tangles is defined by ambient  isotopies of the $3$-ball fixing the
boundary (cf.~\cite{Conway}).  A diagram of a $1$-tangle  is defined in a
manner similar to a knot diagram, from a regular projection to a disk by
specifying  crossing information. 
 As in \cite{CS} we assume that the 1-tangles are oriented from top to bottom.
  A knot
diagram is obtained from a $1$-tangle diagram by closing the end points by a
trivial arc outside of a disk. This procedure is called the {\it closure} of a
$1$-tangle.  If a $1$-tangle is oriented, then the closure inherits the
orientation.
Two  diagrams of the same $1$-tangle are related by Reidemeister moves.
As indicated, for example in \cite{Eis1}, 
there is a bijection from isotopy classes of knots to those of the $1$-tangles, corresponding to the closure.
Thus  an invariant of a 1-tangle $T$ corresponding to a knot $K$ is an invariant of $K$.
 
 For simplicity we often identify a $1$-tangle $T$  with a diagram of $T$ and similarly for knots. 
A quandle coloring of an oriented $1$-tangle diagram is defined in a  manner
similar to  those for knots.  We do not require that the end points receive the
same color for a quandle coloring of $1$-tangle diagrams.
 As in \cite{CSV} we say that a quandle $Q$ is {\it end monochromatic} for a tangle diagram $T$ if 
any coloring of $T$ by $Q$ assigns the same color on the two end arcs.

\begin{figure}[htb]
    \begin{center}
   \includegraphics[width=1.1in]{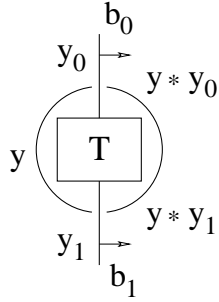}\\
    \caption{ Colorings of a  tangle }\label{endmono}
    \end{center}
\end{figure}

We will need the following two lemmas.
 
\begin{lemma}[Eisermann~\cite{Eis3}, Theorem 30] \label{lem:cover}
Let $f: Y \rightarrow X$ be a covering, and 
$C_X: {\cal A} (T) \rightarrow X$ be a coloring of a $1$-tangle $T$ by $X$. 
Let $b_0, b_1$ be the top and bottom arcs as depicted in Figure~\ref{endmono}. 
Then  for any $y \in Y $ such that $f(y)=C_X(b_0)$,  
there exists a unique coloring $C_Y: {\cal A} (T) \rightarrow Y$ such that 
$f  C_Y=C_X$ and $C_Y(b_0)=y$.
\end{lemma}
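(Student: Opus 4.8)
The plan is to prove this by unique lifting of colors along the single strand of the $1$-tangle, in exact analogy with unique path lifting for topological coverings; the quandle covering condition plays the role that the local-homeomorphism property plays there. Since $T$ is a $1$-tangle its diagram is a single arc, and traversing it from the top endpoint to the bottom endpoint meets the arcs in a linear order $a_0 = b_0, a_1, \dots, a_n = b_1$, where the arc label changes precisely at each undercrossing. Thus every crossing $\tau$ of $T$ corresponds to a unique step $i$ at which $a_{i-1}$ passes under some over-arc $a_{j(i)}$ to become $a_i$, and the coloring rule of Figure~\ref{coloredXing} reads $C_X(a_i) = R_{C_X(a_{j(i)})}^{\,\e_i}\big(C_X(a_{i-1})\big)$, where $\e_i \in \{\pm 1\}$ is determined by the sign and orientation of $\tau$.

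The key observation is a direct consequence of the covering condition: if $f(\tilde c) = f(\tilde c')$ for $\tilde c, \tilde c' \in Y$, then $R_{\tilde c} = R_{\tilde c'}$, so the right translation of an element of $Y$ depends only on its image in $X$. Hence for each $c \in X$ I can define a single permutation $\rho_c \colon Y \to Y$ by $\rho_c := R_{\tilde c}$ for any $\tilde c \in f^{-1}(c)$, which is nonempty since $f$ is onto; this is well defined. I would then build $C_Y$ by propagation: set $C_Y(a_0) := y$ and recursively $C_Y(a_i) := \rho_{C_X(a_{j(i)})}^{\,\e_i}\big(C_Y(a_{i-1})\big)$. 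The crucial point, and the heart of the argument, is that this recursion refers only to the known $X$-colors $C_X(a_{j(i)})$ of the over-arcs and never to their lifts, which may not yet have been computed when $j(i) > i$; the covering condition is exactly what removes that dependence. This is the step I expect to be the conceptual crux.

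It then remains to verify the two asserted properties. First, $f C_Y = C_X$ follows by induction: since $f$ is a quandle homomorphism, $f\big(\rho_c(z)\big) = f(z) * c = R_c\big(f(z)\big)$ for all $z \in Y$, and taking inverses $f \rho_c^{-1} = R_c^{-1} f$, so $f\big(C_Y(a_i)\big) = R_{C_X(a_{j(i)})}^{\,\e_i}\big(f(C_Y(a_{i-1}))\big) = C_X(a_i)$, starting from the base case $f(y) = C_X(b_0)$. Second, $C_Y$ is a genuine coloring: at the crossing corresponding to step $i$ the first property gives $f\big(C_Y(a_{j(i)})\big) = C_X(a_{j(i)})$, whence $R_{C_Y(a_{j(i)})} = \rho_{C_X(a_{j(i)})}$ by the covering condition, so the defining recursion reads $C_Y(a_i) = R_{C_Y(a_{j(i)})}^{\,\e_i}\big(C_Y(a_{i-1})\big)$, which is precisely the coloring relation at $\tau$; since every crossing arises from a unique step, all crossing conditions hold.

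Finally, for uniqueness suppose $C_Y'$ also satisfies $f C_Y' = C_X$ and $C_Y'(a_0) = y$. At each step the coloring relation forces $C_Y'(a_i) = R_{C_Y'(a_{j(i)})}^{\,\e_i}\big(C_Y'(a_{i-1})\big)$, and $f\big(C_Y'(a_{j(i)})\big) = C_X(a_{j(i)})$ again yields $R_{C_Y'(a_{j(i)})} = \rho_{C_X(a_{j(i)})}$, so $C_Y'$ obeys the very recursion defining $C_Y$. With the same initial value $y$, induction gives $C_Y' = C_Y$. Beyond setting up the linear traversal of the strand correctly and recording that each crossing is met exactly once as an undercrossing, I do not anticipate a serious obstacle; the covering condition does all of the real work, in both the existence and the uniqueness halves.
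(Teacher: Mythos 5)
Your proof is correct. Note that the paper itself gives no proof of this lemma --- it is quoted from Eisermann (\emph{Quandle coverings and their Galois correspondence}, Theorem 30) --- so there is nothing internal to compare against; but your argument is exactly the standard unique-lifting proof one expects behind that citation. You correctly isolate the one point where the covering hypothesis is indispensable: since the under-strand of a $1$-tangle is traversed linearly from $b_0$ to $b_1$ while the over-arcs at its crossings may occur anywhere along that order, a naive recursion for the lift would need the (not yet constructed) $Y$-colors of the over-arcs; the covering condition lets you replace $R_{C_Y(a_{j(i)})}$ by the well-defined permutation $\rho_{C_X(a_{j(i)})}$ depending only on the known $X$-color, which breaks the circularity. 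The remaining verifications (that $fC_Y=C_X$ by induction using that $f$ is a homomorphism, that the crossing relations hold once $fC_Y=C_X$ is known, and that uniqueness follows because any lift obeys the same recursion) are all sound; the only hypotheses you quietly rely on --- surjectivity of $f$ so that $f^{-1}(c)\neq\emptyset$, and bijectivity of $R_{\tilde c}$ so that $\rho_c^{-1}$ makes sense for negative crossings --- are indeed supplied by the paper's definition of a covering as an epimorphism and by the quandle axioms. No gap.
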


The proof of the following crucial lemma established  in \cite{Nos,Jozef,Jozef2004} for faithful quandles  extends easily to non-faithful quandles. The idea of proof is seen in Figure~\ref{endmono}.

\begin{lemma}\label{lem:end}
Let  $C:  {\cal A} (T) \rightarrow Y$ be a coloring of a classical $1$-tangle diagram $T$ by a quandle $Y$. 
For the top and bottom arcs $b_0$ and $b_1$ of $T$, respectively, 
let $y_0=C(b_0)$ and $y_1=C(b_1)$.
Then  ${\rm inn}(y_0)=R_{y_0}=R_{y_1}={\rm inn}(y_1)$. 
\end{lemma}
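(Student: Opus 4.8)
The plan is to track how a single color propagates along the strand of $T$ and then to invoke the classical hypothesis through a meridian--longitude commutation. Reading the diagram from the top arc $b_0$ downward in the given top-to-bottom orientation, the running color is unchanged at every overcrossing and is hit by $R_a^{\pm 1}$ at each undercrossing, where $a$ is the color of the passing overarc. Composing these elementary moves in order produces a single element $w = R_{a_m}^{\epsilon_m}\cdots R_{a_1}^{\epsilon_1} \in \mathrm{Inn}(Y)$ for which $y_1 = w(y_0)$. I would record this as the first step, noting that $w$ is exactly the image under the coloring of the longitude of the closure $\widehat T = K$, since the trivial closure arc carries no crossings.

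With this in hand the second step is immediate from the automorphism property of right translations. Since every element of $\mathrm{Inn}(Y)$ is a quandle automorphism, the identity $R_{\psi(a)} = \psi R_a \psi^{-1}$ holds for all $\psi \in \mathrm{Inn}(Y)$ and $a \in Y$ (a restatement of the self-distributive law). Applying it with $\psi = w$ gives $R_{y_1} = R_{w(y_0)} = w\,R_{y_0}\,w^{-1}$, so the desired equality $R_{y_0} = R_{y_1}$ is equivalent to the assertion that $w$ centralizes $R_{y_0}$ in $\mathrm{Inn}(Y)$.

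Establishing that commutation is the crux, and it is precisely where the hypothesis that $T$ is a \emph{classical} $1$-tangle enters. My preferred justification is representation-theoretic: the conjugation identity above shows that $a \mapsto R_{C(a)}$ turns each Wirtinger crossing relation into the corresponding group relation, so the coloring induces a homomorphism $\rho \colon \pi_1(B^3 \setminus T) \to \mathrm{Inn}(Y)$ carrying the meridian of each arc $a$ to $R_{C(a)}$; in particular it sends $\mu_{b_0}\mapsto R_{y_0}$, $\mu_{b_1}\mapsto R_{y_1}$, and the longitude to $w$. The complement $B^3 \setminus T$ has a single peripheral torus, on which meridian and longitude commute, so $[\mu_{b_0}, w] = 1$ and hence $\mu_{b_1} = w^{-1}\mu_{b_0} w = \mu_{b_0}$ already in $\pi_1(B^3 \setminus T)$; pushing this through $\rho$ yields $R_{y_0} = R_{y_1}$. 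Diagrammatically this is the content of Figure~\ref{endmono}: transporting a meridian of $b_0$ down the strand and back along the crossingless closure arc returns it to itself. I expect the meridian--longitude commutation to be the only substantive point, everything else being bookkeeping along the strand.

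Finally I would observe that faithfulness is never used above, which is exactly why the argument known for faithful quandles carries over essentially verbatim. When $Y$ is faithful, $\mathrm{inn}$ is injective and $R_{y_0}=R_{y_1}$ upgrades to $y_0=y_1$, recovering \cite{Nos,Jozef,Jozef2004}; for general $Y$ one simply stops at the level of inner automorphisms, and the possible failure of $y_0=y_1$ for non-faithful $Y$ is what leaves room for the non-end-monochromatic colorings exploited in later sections. An alternative packaging is to transport the coloring to the conjugation quandle $\mathrm{inn}(Y)$ via the covering $\mathrm{inn}$ and apply Lemma~\ref{lem:cover}; in any formulation the meridian--longitude commutation remains the essential ingredient.
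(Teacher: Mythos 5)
Your argument is correct, but it takes a genuinely different (and heavier) route than the one the paper has in mind. The paper gives no written proof: it asserts that the argument of \cite{Nos,Jozef,Jozef2004} for faithful quandles carries over unchanged, and points to Figure~\ref{endmono}, whose content is the purely diagrammatic version of the statement --- colour an auxiliary short under-strand passing beneath $b_0$ by an arbitrary $z\in Y$, so it emerges coloured $z*y_0$, and slide it down the length of the tangle by Reidemeister II and III moves (under which colourings persist); it arrives as an under-strand of $b_1$, giving $z*y_0=z*y_1$ for every $z$, i.e.\ $R_{y_0}=R_{y_1}$, with faithfulness never entering. Your proof instead routes through the Wirtinger presentation: you package the colouring as a homomorphism $\rho\colon\pi_1(B^3\setminus T)\to \mathrm{Inn}(Y)$, identify the transport element $w$ with $\rho$ of the longitude, reduce the claim to $[R_{y_0},w]=1$ via $R_{w(y_0)}=w\,R_{y_0}\,w^{-1}$, and import the meridian--longitude commutation from the peripheral torus of the (tangle $=$ knot) complement. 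That commutation is exactly the group-theoretic shadow of the diagrammatic slide, so the two arguments are the same fact in different clothing; what yours buys is an explicit link to the peripheral system, which makes the later comparison with Eisermann's colouring polynomial (Theorem~\ref{Psi=P}) transparent, at the cost of invoking $\pi_1$ machinery for what the paper treats as elementary diagram combinatorics. Two minor points: the verification that $\mu_a\mapsto R_{C(a)}$ respects the Wirtinger relations rests on $R_{b*c}=R_cR_bR_c^{-1}$ with signs matched to the conventions of Figure~\ref{coloredXing} (with the opposite convention one must send $\mu_a\mapsto R_{C(a)}^{-1}$, which changes nothing); and your closing suggestion to rederive the lemma from Lemma~\ref{lem:cover} does not work as stated, since that lemma lifts colourings along a covering but says nothing by itself about the colour of the bottom arc.
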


\section{Invariants from coloring $1$-tangles}\label{sec:def} 

In this section we define a knot invariant using quandle colorings of corresponding $1$-tangles.
  Let $T$ be a 1-tangle whose closure is the knot $K$. 
 Let ${\cal A}(T) $ denote the set of arcs of the $1$-tangle diagram $T$.  We 
 denote the top arc of $T$ by $b_0$ and the bottom arc by $b_1$ as in Figure~\ref{endmono}.
 For arbitrary fixed $e \in Q$ denote the set of colorings  $C: {\cal A}(T) \rightarrow Q$ by a quandle $Q$ 
 such that $C(b_0)=e$ by ${\rm Col}^e_Q(T)$. 

For a connected quandle $Q$ and elements $a, b \in Q$, let $\col{Q}{a}{b}{T}$ be the number of colorings $C$ of $T$ such that  $C(b_0) = a$ and $C(b_1) = b$.  Since the 1-tangle $T$ is uniquely determined up to isotopy by $K$, 
$\col{Q}{a}{b}{K}  := \col{Q}{a}{b}{T}$  is an invariant of $K$. 
It is clear that if $f$ is an automorphism of $Q$ then the mapping $C \mapsto fC$  is a one-to-one correspondence between the  colorings $C$ of $T$ with $C(b_0) = a$, $C(b_1) = b$ and the colorings $C'$ of $T$ with $C'(b_0) = f(a)$, $C'(b_1) = f(b)$. Hence we have $\col{Q}{a}{b}{T} = \col{Q}{f(a)}{f(b)}{T}$.  
Consider the action of the group ${\rm Inn}(Q)$ on the set of pairs $Q \times Q$ in the obvious way, namely, for $g \in {\rm Inn}(Q)$ and $(a,b) \in Q \times Q$ set $g(a,b) = (g(a),g(b))$. For convenience write $(a,b) \sim (c,d)$ if $(a,b)$ and $(c,d)$ are in the same orbit of this action.  Thus we have that 
if $(a,b) \sim (c,d)$ then $\col{Q}{a}{b}{T} = \col{Q}{c}{d}{T}$. 
Hence 
it suffices to consider only the invariants  $\col{X}{a}{b}{T}$ where $(a,b)$ runs through a set of representatives of the orbits of pairs under the action of ${\rm Inn}(X)$.  
We assume $Q$ is connected so that ${\rm Inn}(Q)$ acts transitively on $Q$. Thus we may fix $a=e$ for an arbitrarily chosen $e \in Q$, and  then every orbit has a representative of the form $(e,b)$ for some $b \in Q$.   
Furthermore, by Lemma~\ref{lem:end}, for $C \in {\rm Col}_Q^e(T)$,  $b = C(b_1)$  satisfies $R_b = R_e$. That is, $b$ lies the the fiber $F_e = {\rm inn}^{-1}(R_e)$.  Thus we define the following invariant.

\begin{definition}
{\rm 
 For arbitrary fixed $e \in Q$ denote the set of colorings  $C: {\cal A}(T) \rightarrow Q$ by quandle $Q$ 
 such that $C(b_0)=e$ by ${\rm Col}_Q^e(T)$. Define 
$$\Psi_Q^e(K) = \sum_{C \in  {\rm Col}_Q^e(T) } C(b_1). $$
We think of $\Psi_Q^e(K)$ as lying in the free $\Z$-module $\Z[F_e]$  with basis $F_e$. 
}
\end{definition}

\begin{remark}
{\rm
Our motivation for defining $\Psi^e_Q(K)$ came from the proof of Proposition~\ref{connected_sum} below. 
Later we discovered the relation to Eisermann's knot coloring polynomial discussed in Section~\ref{sec:colorpoly}.
}
\end{remark}

Computation of the invariant $\Psi^e_Q(K)$ is performed as follows.
For a quandle $Q$ of order $n$ we may, by relabeling if necessary, assume that the elements of $Q$ are the integers $1,2,\ldots,n$.  
By the  matrix $M$ of $Q$ we mean the $n \times n$ matrix $M$ such that $M_{i,j} = i*j$ where $*$ denotes the quandle product.    Note that $R_i = R_j$ 
means that  the $i$\/th and $j$\/th columns of $M$ are equal. By relabeling  the quandle elements we can assume that 
$\{ j : R_1 = R_j \} = \{1,2,\dots,s \}$. 
Also again by relabeling if necessary we may assume that for $ i,j \in \{1,2,\dots,s \}$  we have that $(1,i) \nsim (1,j)$. With this convention we may think of the knot  invariant $\Psi^1_Q(K)$ as the  vector in $\Z^s$ given by
$$\Psi^1_Q(K)  = (\col{Q}{1}{1}{T},\col{Q}{1}{2}{T},\dots,\col{Q}{1}{s}{T})$$ 
where $T$ is a tangle corresponding to $K$.  Note that if $s = 1$ then the quandle $Q$ is faithful and $\Psi^1_Q(K) =\col{Q}{1}{1}{T}$ is just the number of colorings of the knot $K$ by $Q$ divided by $|Q|$.

We formulate the relation between $\Psi^e_Q(K)$ and $\Psi^e_Q( rm (K) )$ as follows. 
First we recall such a formula for the cocycle invariant $\Phi_\phi(K)$ (see Section~\ref{sec:cocyinv} below for the definition).
 For an abelian group $A$ and an element $g= \sum_h g_h h$, $g_h \in \Z$, $h \in A$, 
in the group ring $\Z [A]$, the element
	$\overline{g}=\sum_h g_h h^{-1} \in \Z [A]$ is called the {\it conjugate} of
	$g$.
In \cite{CENS}  it is established that 
  $$\Phi_{\phi} (rm(K))= \overline{ \Phi_{\phi} (K) }.$$ 
In other words, after computing $\Phi_{\phi} (K)$ one essentially gets  $\Phi_{\phi} (rm(K))$  for free.

We now  show that the components of  $\Psi^1_Q(rm(K))$ are a permutation $p$ of the components of  $\Psi^1_Q(K)$ depending on the quandle  $Q$ only. 
Thus, to determine whether or not the invariant can distinguish $K$ from $rm(K)$ it is only necessary to compute $\Psi^1_Q(K)$ and the permutation $p$.

\begin{lemma}
For any knot $K$, there exists a permulation $p$ of order $2$ on $\{1, \ldots, s\}$ such that 
  $$\Psi^1_Q(rm(K))  = (\col{Q}{1}{p(1)}{T},\col{Q}{1}{p(2)}{T},\dots,\col{Q}{1}{p(s)}{T}).$$ 
\end{lemma}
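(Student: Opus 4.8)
The plan is to reduce the statement to a single geometric identity between coloring counts of the two tangles, and then to absorb a normalization into a permutation of the fiber using the ${\rm Inn}(Q)$-action on $Q \times Q$ that was set up before the statement. Concretely, writing $T'$ for a tangle whose closure is $rm(K)$, I would first prove the symmetry
$$\col{Q}{a}{b}{rm(K)} = \col{Q}{b}{a}{K} \qquad (a,b \in Q),$$
that is, that passing from $K$ to $rm(K)$ simply interchanges the roles of the top and bottom colors. Granting this, the $j$-th component of $\Psi^1_Q(rm(K))$ is $\col{Q}{1}{j}{rm(K)} = \col{Q}{j}{1}{K}$, and it remains only to rewrite $\col{Q}{j}{1}{K}$ as one of the components $\col{Q}{1}{p(j)}{T}$ of $\Psi^1_Q(K)$.

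To establish the symmetry I would build $T'$ from a diagram of $T$ by the two operations defining $rm$: a reflection (producing the mirror) followed by a reversal of the strand orientation. A planar reflection preserves the over/under information and the arc structure but reverses the sign of every crossing, so it converts the defining relation $z = x * y$ at each crossing into the relation $z = x \,\bar{*}\, y$ for the dual operation $x \,\bar{*}\, y = R_y^{-1}(x)$; reversing the orientation of the single strand preserves all crossing signs but interchanges the incoming and outgoing under-arcs at each crossing, which flips the relation back from $\bar{*}$ to $*$. Hence the two operations together leave the coloring relation at every crossing unchanged, and assigning to each arc of $T'$ the color of the corresponding arc of $T$ gives a bijection ${\rm Col}_Q(T) \to {\rm Col}_Q(T')$ that is literally the identity on colors. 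The only remaining bookkeeping is that the reflection keeps the top arc at the top while the reversal forces the standard top-to-bottom redrawing to interchange the two endpoints; thus the arc playing the role of $b_0$ for $T'$ corresponds to $b_1$ of $T$ and vice versa, which is exactly the interchange $(a,b) \mapsto (b,a)$ of endpoint colors claimed above.

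Finally I would produce $p$ from the ${\rm Inn}(Q)$-action. Given $j \in F_1 = \{1,\dots,s\}$, connectedness of $Q$ provides $g \in {\rm Inn}(Q)$ with $g(j) = 1$; then $g(j,1) = (1, g(1))$, so $(j,1) \sim (1, g(1))$ and $\col{Q}{j}{1}{T} = \col{Q}{1}{g(1)}{T}$. Setting $p(j) = g(1)$, one checks that $p(j) \in F_1$: from $R_j = R_1$ together with $R_{g(j)} = g R_j g^{-1}$ and $g(j)=1$ we get $g R_1 g^{-1} = R_1$, whence $R_{p(j)} = R_{g(1)} = g R_1 g^{-1} = R_1$. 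Uniqueness of $p(j)$ inside $\{1,\dots,s\}$ follows from the standing convention that the pairs $(1,1),\dots,(1,s)$ lie in distinct ${\rm Inn}(Q)$-orbits, which also shows that $p$ is well defined independently of the choice of $g$ and depends only on $Q$, not on $K$. Replacing $g$ by $g^{-1}$ interchanges the roles of $j$ and $p(j)$, giving $p(p(j)) = j$, so $p$ is an involution; combining this with the first two paragraphs yields that the $j$-th component of $\Psi^1_Q(rm(K))$ equals $\col{Q}{1}{p(j)}{T}$, as required.

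The main obstacle I anticipate is the orientation-and-sign bookkeeping in the middle paragraph: one must verify that the mirror and the reversal really do compose to the identity on the local coloring relation, rather than to some operation differing by an inner automorphism, and that the resulting identification of colorings is the identity on colors with only the two endpoints interchanged. Once that geometric normalization is pinned down, the passage to the permutation $p$ is a routine application of the orbit formalism and of Lemma~\ref{lem:end} already recorded before the statement.
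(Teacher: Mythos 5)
Your proposal is correct and follows essentially the same route as the paper's proof: the endpoint-swap identity $\col{Q}{a}{b}{rm(T)}=\col{Q}{b}{a}{T}$ (which the paper asserts and you justify in more detail), followed by using connectedness and the ${\rm Inn}(Q)$-orbit equivalence to rewrite $\col{Q}{j}{1}{\cdot}$ as $\col{Q}{1}{p(j)}{\cdot}$, with the distinct-orbit convention guaranteeing that $p$ is a well-defined involution of $\{1,\dots,s\}$. The only cosmetic difference is that you apply the inner automorphism to colorings of $T$ rather than of $rm(T)$, which lets you avoid the paper's final replacement of $p^{-1}$ by $p$.
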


\begin{proof}
Let  $T$ be a 1-tangle and $rm(T)$ be its reverse mirror image. If $b_0,b_1$ are, respectively,
the top and bottom arcs of $T$  then  $b_1,b_0$ are, respectively, the top and bottom arcs of $rm(T)$.
Then the colorings $C$ by $Q$ of $T$ with $C(b_0) = 1$ and $C(b_1) = j$ are in one-to-one
correspondence with the colorings $C'$ of $rm(T)$ satisfying  $C'(b_0) = j$ and $C'(b_1) = 1$.
That is, $\col{Q}{1}{j}{T}=\col{Q}{j}{1}{rm(T)}$ for $j \in \{1,2,\dots,s\}$.
 We note that by connectedness there is for each $j \in Q$ an automorphism $f_j$ such that 
$f_j(j) = 1$,  so then we have $\col{Q}{j}{1}{rm(T)} = \col{Q}{1}{f_j(1)}{rm(T)}$.  It follows that 
 $$\col{Q}{1}{f_j(1)}{rm(T)}= \col{Q}{1}{j}{T}$$
for all  $j \in \{1,2,\dots,s\}$.  Note that clearly $R_1 = R_j$ implies that $R_{f_j(1)} = R_{f_j(j)} = R_1$ and hence $f_j(1) \in \{1,2, \dots,s \}$. Since the pairs $(1,j)$, $ j =1,2,\dots, s$ are in distinct
orbits by assumption  we must have 
$\{f_1(1),f_2(1),\dots,f_s(1)\} = \{1,2,\dots,s \}$.   That is,
there is a permutation $p$ of $\{1,2,\dots,s \}$ such that
$p(i) = f_i(1)$ and we have 
     $$\col{Q}{1}{p(j)}{rm(T)}= \col{Q}{1}{j}{T}$$
or
     $$\col{Q}{1}{j}{rm(T)}= \col{Q}{1}{p^{-1}(j)}{T} . $$
Hence we obtain the formula by replacing $p^{-1}$ by $p$.  It is clear by the construction that $p$ is an involution.
\end{proof}

 \begin{remark}
 {\rm 
Note that since $p(1) = f_1(1) = 1$, if $s = 2$ for a quandle $Q$ then $p = {\rm identity}$ so  the invariant $\Psi^e_Q$ will not be able to distinguish $K$ from $rm(K)$. In particular this is true for any non-faithful quandle $Q$ such that
 $|Q|/|{\rm inn}(Q)| = 2$.  
 Quandles  $Q$ with $|Q|/|{\rm inn}(Q)| > 2$ will not be able to distinguish $K$ from $rm(K)$ whenever $p = {\rm identity}$. 
This is the case for the Rig quandles (see Section~\ref{sec:nonfaith} below)  $Q(30,4)$, $Q(36,57)$, $Q(36,58)$ and $Q(45,20)$ each of which has $|Q|/|{\rm inn}(Q)| = 3$.  This helps to eliminate efficiently many quandles $Q$ for which $\Psi^e_Q$ cannot distinguish $K$ from $rm(K)$.
}
\end{remark}

Next we compare this invariant with the colorings of connected sums studied in  \cite{CSV}. 
There an invariant $K \mapsto {\rm Col}_Q(K \#P)$ was defined for each knot $P$ and quandle $Q$, where ${\rm Col}_Q(K \#P)$ is the number of colorings of the connected sum $K \#P$ by the quandle $Q$.

\begin{proposition} \label{connected_sum} 
\begin{sloppypar}
If for knots $K_1$ and $K_2$ there is a quandle $Q$ and a knot $P$ such that ${\rm Col}_Q(K_1 \# P) \neq {\rm Col}_Q(K_2 \# P)$, then $ \Psi^e_Q(K_1) \ne \Psi^e_Q(K_2).$
\end{sloppypar}
\end{proposition}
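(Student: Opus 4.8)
The plan is to prove the contrapositive: if $\Psi^e_Q(K_1) = \Psi^e_Q(K_2)$, then ${\rm Col}_Q(K_1 \# P) = {\rm Col}_Q(K_2 \# P)$ for every knot $P$. The engine is a transfer-matrix style decomposition of the colorings of a connected sum. First I would realize $K_i \# P$ as the closure of the concatenated $1$-tangle $T_i \cdot T_P$, where $T_i$ is a tangle corresponding to $K_i$ and $T_P$ one corresponding to $P$, with the bottom arc of $T_i$ identified with the top arc of $T_P$. A coloring of the concatenation is precisely a pair consisting of a coloring of $T_i$ and a coloring of $T_P$ that agree on the shared junction arc, and the trivial closure arc (which crosses nothing) forces the top color of $T_i$ to equal the bottom color of $T_P$. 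Splitting the count according to the top color $a$ of $T_i$ and the junction color $b$, and imposing that the bottom color of $T_P$ also equals $a$, yields
\[
{\rm Col}_Q(K_i \# P) = \sum_{a,b \in Q} \col{Q}{a}{b}{T_i}\,\col{Q}{b}{a}{T_P}.
\]

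Next I would reduce this double sum to a single sum over the fiber $F_e$, using the automorphism invariance $\col{Q}{a}{b}{T} = \col{Q}{g(a)}{g(b)}{T}$ together with connectedness of $Q$. For fixed $a$, choose $g_a \in {\rm Inn}(Q)$ with $g_a(a) = e$, which exists since ${\rm Inn}(Q)$ acts transitively. Applying $g_a$ to both factors and reindexing by $b' = g_a(b)$, a bijection of $Q$, shows that the inner sum over $b$ equals $\sum_{b' \in Q} \col{Q}{e}{b'}{T_i}\,\col{Q}{b'}{e}{T_P}$, which no longer depends on $a$. By Lemma~\ref{lem:end} each summand vanishes unless $b' \in F_e$, so summing the $|Q|$ identical contributions gives
\[
{\rm Col}_Q(K_i \# P) = |Q| \sum_{b \in F_e} \col{Q}{e}{b}{T_i}\,\col{Q}{b}{e}{T_P}.
\]

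The right-hand side is a linear function of the coefficient vector $\big(\col{Q}{e}{b}{T_i}\big)_{b \in F_e}$, which is exactly $\Psi^e_Q(K_i)$ expressed in the basis $F_e$ of $\Z[F_e]$, with coefficients $|Q|\,\col{Q}{b}{e}{T_P}$ that depend only on $Q$ and $P$. Hence $\Psi^e_Q(K_1) = \Psi^e_Q(K_2)$ forces ${\rm Col}_Q(K_1 \# P) = {\rm Col}_Q(K_2 \# P)$ for every $P$, which is the contrapositive. I expect the main obstacle to be the $a$-independence step: one must verify that conjugating by $g_a$ carries \emph{both} factors simultaneously into base-point form, and that reindexing over the entire set $Q$ (and then restricting to $F_e$ via Lemma~\ref{lem:end}) absorbs any ambiguity in the choice of $g_a$, so that the contributions for distinct $a$ genuinely coincide and collapse into the clean prefactor $|Q|$. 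The initial decomposition of the connected-sum coloring count and the final passage from $\Z[F_e]$-coefficients to $\Psi^e_Q$ are then routine.
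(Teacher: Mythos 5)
Your proposal is correct and follows essentially the same route as the paper: decompose the colorings of the closure of the concatenated tangle $T_i\cdot T_P$ as a sum of products $\col{Q}{e}{a}{T_i}\,\col{Q}{a}{e}{T_P}$, pick up the factor $|Q|$ from connectedness, and observe that the resulting expression is a linear function of the coefficients of $\Psi^e_Q(K_i)$. The only cosmetic difference is that you normalize to the base point $e$ after splitting the sum (via the inner automorphisms $g_a$) while the paper normalizes the whole concatenated tangle first; the resulting formulas coincide.
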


\begin{proof} Let $T_i$ be a 1-tangle whose closure is $K_i$,  $i = 1,2$ and let $T_P$ be a 1-tangle whose closure is $P$. Let $b_0^i, b_1^i$ be, respectively, the top and bottom arcs of $T_i$  and let $c_0, c_1$ be, respectively, the top and bottom arcs of $T_P$. Then the 1-tangle $T(K_i \#P)$ whose closure is $K_i \# P$ can be obtained by joining the bottom arc $b_1^i$ of $T_i$ to the  top arc $c_0$ of $T_P$. Then  it is clear that for $i = 1,2$ we have
$${\rm Col}_Q(K_i \# P) = |Q|{\rm Col}_Q^{e,e}(T(K_i \#P))= |Q|\sum_{a \in Q} {\rm Col}_Q^{e,a}(T_i) {\rm Col}_Q^{a,e}(T_P) .$$
It follows that if $\Psi^e_Q(K_1) = \Psi^e_Q(K_2)$ then ${\rm Col}_Q(K_1 \# P) = {\rm Col}_Q(K_2 \# P)$, as we wanted to show.
\end{proof}

\begin{remark} 
{\rm
Let  $\mathscr{Q}(K)$  denote the fundamental quandle (or knot quandle) (\cite{Joyce},\cite{Mat} ) of the knot $K$. It is known that the quandles $\mathscr{Q}(K_1)$ and $\mathscr{Q}(K_2)$ are isomorphic if and only if $K_1 = K_2$ or $K_1 = rm(K_2).$ Since colorings of a knot $K$ by a quandle $Q$ are equivalent to homomorphisms from $\mathscr{Q}(K)$ to $Q$, clearly one cannot expect to distinguish $K$ from $rm(K)$ using the number of colorings ${\rm Col}_Q(K)$ of $K$  by $Q$ alone. On the other hand, it was shown in \cite{CSV} that given
two oriented knots $K_1$ and $K_2$ such that $K_1 \neq K_2$, 
there is a knot $P$ such that 
 $P \# K_2 \neq rm(P \# K_1)$ and hence we have that the fundamental quandles $\mathscr{Q}(P \# K_2)$ and $\mathscr{Q}(P \# K_1)$ are not isomorphic. 
This together with Proposition~\ref{connected_sum} leads to some hope that $\Psi^e_Q$ may be a complete invariant. 
}
\end{remark}

\begin{conjecture} \label{conjecture2} The invariant $\Psi^e_Q$ is a complete invariant for oriented knots, that is,
for two oriented knots $K_1$ and $K_2$, if   $K_1 \neq K_2$ then there exists a finite connected quandle $Q$ such that 
 $\Psi^e_Q(K_1) \neq \Psi^e_Q(K_2)$.
\end{conjecture}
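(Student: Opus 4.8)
The plan is to reduce the completeness of $\Psi^e_Q$ to a separation statement about knot quandles, using Proposition~\ref{connected_sum} together with the facts recorded in the remark preceding the conjecture. Suppose $K_1 \neq K_2$ are non-isotopic oriented knots. By the result of \cite{CSV} quoted above there is a knot $P$ with $P \# K_2 \neq rm(P \# K_1)$; since $rm$ is an involutive bijection of oriented knots, this is equivalent to $P \# K_1 \neq rm(P \# K_2)$. On the other hand, connected sum of oriented knots is cancellative (by unique prime decomposition), so $P \# K_1 = P \# K_2$ would force $K_1 = K_2$; hence $P \# K_1 \neq P \# K_2$ as well. By the Joyce--Matveev classification (\cite{Joyce}, \cite{Mat}), $\mathscr{Q}(L_1) \cong \mathscr{Q}(L_2)$ holds precisely when $L_1 = L_2$ or $L_1 = rm(L_2)$, so the two inequalities just obtained force the fundamental quandles $A := \mathscr{Q}(P \# K_1)$ and $B := \mathscr{Q}(P \# K_2)$ to be non-isomorphic.

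Next I would reformulate the goal through homomorphism counts. Since $\mathrm{Col}_Q(L) = |\mathrm{Hom}(\mathscr{Q}(L), Q)|$ and connected sum is commutative, Proposition~\ref{connected_sum} shows that it suffices to produce a single finite quandle $Q$ with $|\mathrm{Hom}(A, Q)| \neq |\mathrm{Hom}(B, Q)|$: this gives $\mathrm{Col}_Q(K_1 \# P) \neq \mathrm{Col}_Q(K_2 \# P)$ and hence $\Psi^e_Q(K_1) \neq \Psi^e_Q(K_2)$. To arrange that $Q$ be connected, as the conjecture requires, I would use that a knot quandle is connected, so every homomorphism from $A$ or $B$ has connected image; writing $|\mathrm{Hom}(A,Q)| = \sum_{C} |\mathrm{Surj}(A, C)|$ with the sum over the connected subquandles $C \subseteq Q$ and inducting on $|C|$, one sees that if some finite $Q$ separates $A$ from $B$ then already some finite connected $Q$ does. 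In this way the whole conjecture reduces to a single separation principle: \emph{any two non-isomorphic finitely generated knot quandles $A \not\cong B$ admit a finite quandle $Q$ with $|\mathrm{Hom}(A,Q)| \neq |\mathrm{Hom}(B,Q)|$}.

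The hard part will be exactly this separation principle, and it is what keeps the statement a conjecture. Knowing $|\mathrm{Hom}(A,Q)|$ for every finite $Q$ is essentially residual, profinite-type data about $A$, so the principle is a quandle analogue of asking whether a knot quandle is determined by its finite quotients --- a rigidity question parallel to profinite rigidity of knot groups, which is known to fail for some $3$-manifolds. The route I would attempt is first to establish residual finiteness of knot quandles, separating elements and the self-distributive structure by finite quotients, and then to exploit the rigidity special to our setting: the quandles $A$ and $B$ are not arbitrary but connected-sum quandles $\mathscr{Q}(P \# K_i)$, and one retains freedom in the choice of $P$ to amplify their difference. Concretely I would search for a finite quandle detecting a distinguishing feature of the peripheral (meridian--longitude) structure that $P$ transports, invoking that knots are determined by their complements. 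Making such detection uniform over all pairs $(K_1, K_2)$ --- rather than case by case, as the computations of Section~\ref{sec:compute} do for small crossing number --- is the genuine obstacle, and I do not expect it to yield to elementary methods.
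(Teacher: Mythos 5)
The statement you are addressing is Conjecture~\ref{conjecture2}; the paper offers no proof of it, only the heuristic remark preceding it and the computational evidence of Section~\ref{sec:compute}, so there is no argument of the authors' to measure yours against. Your proposal is, in substance, a cleaner and more careful write-up of exactly that heuristic: the reduction via Proposition~\ref{connected_sum}, the result of \cite{CSV} producing a knot $P$ with $P\#K_2\neq rm(P\#K_1)$, and the Joyce--Matveev classification are precisely the ingredients the authors cite as the source of ``some hope'' that $\Psi^e_Q$ is complete. The steps you add are correct and worth recording: cancellation of connected sum to get $P\#K_1\neq P\#K_2$ (which the paper's remark uses silently when it concludes non-isomorphism of the fundamental quandles), and the observation that a separating finite quandle can be replaced by a connected one, since knot quandles are connected, every homomorphism factors through a connected subquandle, and a minimal-cardinality induction converts an inequality of surjection counts into an inequality of homomorphism counts onto a connected target.

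The genuine gap is the one you name yourself: the ``separation principle'' that non-isomorphic knot quandles $\mathscr{Q}(P\#K_1)\not\cong\mathscr{Q}(P\#K_2)$ are distinguished by homomorphism counts into some finite quandle. Nothing in the paper establishes this, and it is not a technical loose end but the entire content of the conjecture: it is a profinite-rigidity-type statement, and the paper's own framing (the sentence after the conjecture relating it to Question~1.5 of \cite{Eis2} and to residual finiteness of knot groups) makes clear that the authors regard it as open. So your proposal is not a proof; it is a correct reduction of the conjecture to an equivalent open problem, honestly flagged as such. If you want to salvage a provable statement from your outline, the reduction itself --- that for non-isotopic $K_1,K_2$ there is a $P$ with $\mathscr{Q}(P\#K_1)\not\cong\mathscr{Q}(P\#K_2)$, and that any finite quandle separating these by $\mathrm{Hom}$-counts yields a finite connected $Q$ with $\Psi^e_Q(K_1)\neq\Psi^e_Q(K_2)$ --- is a legitimate lemma; the conjecture remains a conjecture.
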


If Conjecture~\ref{conjecture1} holds, then this conjecture holds.
This conjecture corresponds to Eisermann's remark on the possibility of his knot coloring polynomial being a classifying invariant (Question~1.5 in \cite{Eis2}), based on the characterization of knots by the knot groups with their peripheral systems and the residually finiteness of knot groups.

\section{Relations to other invariants}\label{sec:rel} 

As mentioned earlier, the invariant $\Psi^e_Q(K)$ is defined for generalizing and computing 
the quandle cocycle invariant. In this section we clarify the relationship to the cocycle invariant, and to the knot coloring 
polynomial defined by Eisermann~\cite{Eis2}.
Similar results can be found in \cite{Eis2}. We summarize the relations in terms of our notation and approach below.

\subsection{Relation to the cocycle invariant}\label{sec:cocyinv}

Let $\Lambda$  be a not necessarily abelian group with identity denoted by $1$.
 A function $\phi: X \times X \rightarrow \Lambda$ is called
    a \emph{quandle $2$-cocycle} if 
$$\phi(a,b)\phi(a*b,c) = \phi(a,c)\phi(a*c,b*c)$$
    for all $a,b,c \in Q$ and
    $\phi(a,a)=1$ for all $a \in X$.  For a quandle $2$-cocycle $\phi$, $\Lambda \times X$ is a quandle under the product
    \[
    (\lambda,a) * (\mu, b)=(\lambda \phi(a,b),a*b)
    \]
    for $a,b \in X$, $\lambda, \mu \in \Lambda.$ We denote this quandle  by
    $\Lambda \times_\phi X$. As in \cite{CENS} when $\Lambda$ is abelian it is called an \emph{abelian  extension} of $X$ by $\Lambda$ with respect to $\phi$. 

Simple
examples where $\Lambda$ is non-abelian are given in Example \ref{example}. We found many other examples among the generalized Alexander quandles that we constructed using GAP.

Let $\pi: \Lambda \times_\phi X \rightarrow X$ be the projection onto $X$. This is clearly a homomorphism and the
fibers $\pi^{-1}(x)$ are the sets $\Lambda \times \{x\}$. 
There is an action of $\Lambda$ on $\Lambda \times_\phi X$ given by $\lambda(\mu,x) = (\lambda \mu, x)$
(Definition 3.2 \cite{Eis1}).  It is clear that this action is free and transitive on each fiber  $\Lambda \times \{x\}$. So, for example, if $e = (1,x)$ then each element of $F_e = \Lambda \times \{x\}$ can be written uniquely in the form $\lambda e$ for $\lambda \in \Lambda$.

We recall the  definition from  \cite{CJKLS} of the 2-cocycle invariant $\Phi_\phi(K)$ of a knot $K$.
Let $X$ be a quandle, and $\phi$ be a $2$-cocycle with finite abelian coefficient group $\Lambda$.
 
The $2$-cocycle invariant (or state sum invariant)  is the element of the group ring $\Z [\Lambda]$ 
defined by 
$$\Phi_{\phi} (K) = \sum_{C} \prod_{\tau} \phi(x_\tau, y_\tau)^{\epsilon(\tau)},$$
 where
the product ranges over all crossings $\tau$, the sum ranges over all colorings of a 
given knot diagram,
$(x_\tau, y_\tau)$ are source colors at the crossing $\tau$, and $\epsilon(\tau)$ 
is the sign of $\tau$ as specified in Figure~\ref{coloredXing}.
For a given coloring $C$, we write $B_\phi (K, C) = \prod_{\tau} \phi(x_\tau, y_\tau)^{\epsilon(\tau)} \in \Lambda$. 
Thus we may write
 $$\Phi_\phi(K) = \sum_{\lambda \in \Lambda} n_\lambda \lambda$$
 where $n_\lambda$ is the number of colorings of $K$ by $X$ such that $B_\phi (K, C) =\lambda.$ 

\begin{lemma}\label{CocycleInvariant1} (Cf. Eisermann \cite{Eis2}) Let $Q =\Lambda \times_\phi X$ be a connected  abelian extension of a quandle $X$ with respect to a 2-cocycle $\phi.$  Let $T$ be a 1-tangle whose closure is $K$ with top arc $b_0$ and bottom arc $b_1$. For $x \in X$ let  ${\rm Col}_X^{x, x}(T)$ denote the set $C$ of colorings of $T$ by $X$ such that $C(b_0) = x$ and 
$C(b_1) =  x$. If $$\Phi_{\phi}^x (K) = \sum_{C \in {\rm Col}_X^{x, x}(T)} B_\phi(K,C),$$ then
$$\Phi_\phi(K) = \sum_{y \in X} \Phi_\phi^y(K) = |X| \Phi_\phi^x (K) .$$
\end{lemma}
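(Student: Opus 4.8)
The plan is to prove the two equalities in turn, treating the first as a bookkeeping statement and the second as the substantive one. For the first equality $\Phi_\phi(K) = \sum_{y \in X}\Phi_\phi^y(K)$, I would use the diagram of $K$ obtained as the closure of $T$. The closure joins $b_0$ to $b_1$ by a trivial arc carrying no crossings, so a coloring of $K$ by $X$ is exactly a coloring $C$ of $T$ with $C(b_0)=C(b_1)$; sorting these by their common endpoint value $y=C(b_0)$ identifies the colorings of $K$ with the disjoint union $\bigsqcup_{y\in X}{\rm Col}_X^{y,y}(T)$. Since $K$ and $T$ have the same crossings, $B_\phi(K,C)$ is computed identically for a coloring regarded on $T$ or on $K$, so summing $B_\phi(K,C)$ over all colorings of $K$ and regrouping by $y$ gives $\sum_{y\in X}\Phi_\phi^y(K)$ immediately.

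The second equality reduces to showing that $\Phi_\phi^x(K)$ is independent of $x\in X$, for then the $|X|$ equal summands collapse to $|X|\,\Phi_\phi^x(K)$. First I would record that $X$ is connected: the projection $\pi:Q\to X$ is a surjective quandle homomorphism carrying each $R_q$ to $R_{\pi(q)}$, so transitivity of ${\rm Inn}(Q)$ on $Q$ descends to transitivity of ${\rm Inn}(X)$ on $X$. As ${\rm Inn}(X)$ is generated by the right translations $R_a$, it then suffices to prove $\Phi_\phi^{x*a}(K)=\Phi_\phi^x(K)$ for every $a\in X$.

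The heart of the argument is a weight-preserving bijection. Since $R_a$ is an automorphism of $X$, the map $C\mapsto R_aC$ carries ${\rm Col}_X^{x,x}(T)$ bijectively onto ${\rm Col}_X^{x*a,\,x*a}(T)$, and I would show it preserves cocycle weights, i.e. $B_\phi(K,R_aC)=B_\phi(K,C)$. At each crossing the source colors $(x_\tau,y_\tau)$ become $(x_\tau*a,\,y_\tau*a)$, and the cocycle identity gives $\phi(x_\tau*a,\,y_\tau*a)=\phi(x_\tau,a)^{-1}\phi(x_\tau,y_\tau)\phi(x_\tau*y_\tau,a)$. Multiplying over all crossings, and using that $\Lambda$ is abelian, factors $B_\phi(K,R_aC)$ as $B_\phi(K,C)$ times a correction $W=\prod_\tau\bigl[\phi(x_\tau*y_\tau,a)\,\phi(x_\tau,a)^{-1}\bigr]^{\epsilon(\tau)}$. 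Here $x_\tau$ and $x_\tau*y_\tau$ are the colors of the two under-arcs at $\tau$, so after matching them to the incoming and outgoing under-arcs of the traversal each factor becomes $\phi(C(\text{out-arc}),a)\,\phi(C(\text{in-arc}),a)^{-1}$; since the under-arcs of the single strand of $K$ form one cyclic sequence, the product telescopes to $1$.

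The main obstacle is precisely the verification that $W=1$. It requires carefully matching, at both positive and negative crossings, the source/third-arc labels $x_\tau$ and $x_\tau*y_\tau$ with the incoming/outgoing under-arcs in the traversal so that the exponent $\epsilon(\tau)$ is absorbed uniformly; only then does the product genuinely telescope around the closed strand. Once $W=1$ is established, weight preservation yields $\Phi_\phi^{x*a}=\Phi_\phi^x$ for all $a$, and transitivity of ${\rm Inn}(X)$ makes $\Phi_\phi^x$ independent of $x$, which completes the proof of $\sum_{y\in X}\Phi_\phi^y(K)=|X|\,\Phi_\phi^x(K)$.
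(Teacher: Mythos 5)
Your proof is correct, but it takes a genuinely different route from the paper's. The paper proves the independence of $\Phi_\phi^x(K)$ from $x$ by working upstairs in $Q=\Lambda\times_\phi X$: it lifts each $C\in{\rm Col}_X^{x,x}(T)$ to a coloring of $T$ by $Q$ whose bottom colour records $B_\phi(K,C)$ in its $\Lambda$-coordinate, and then uses connectedness of $Q$ together with the fact that the $\Lambda$-action commutes with ${\rm Inn}(Q)$ (Lemma A.2(i)) to transport $(1,x)$ to $(1,y)$ while preserving the $\Lambda$-coordinate of the bottom colour. You instead stay entirely downstairs in $X$: you exhibit the explicit bijection $C\mapsto R_aC$ from ${\rm Col}_X^{x,x}(T)$ to ${\rm Col}_X^{x*a,x*a}(T)$ and check it is weight-preserving by the cocycle identity plus a telescoping product over the under-arcs of the closed strand (your correction factor is $\phi(u_{\rm out},a)\,\phi(u_{\rm in},a)^{-1}$ at \emph{both} signs of crossing, which is the point that makes the product collapse; this is essentially the standard ``coboundary evaluates to $1$'' computation). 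Both arguments are complete. The paper's version buys the lifting description of $\Phi_\phi^x$ that is reused verbatim in the proof of the next lemma (Lemma 4.2), so it is the economical choice in context; yours is more elementary and slightly more general, since it needs only connectedness of $X$ (which you correctly deduce from connectedness of $Q$ via the projection) and the abelian cocycle condition, not connectedness of the extension itself. One small point to make explicit if you write this up: transitivity of ${\rm Inn}(X)$ requires products of $R_a^{\pm1}$, so either note that $R_a^{-1}$ is a power of $R_a$ in a finite quandle or observe that your bijection is invertible, so the relation $\Phi_\phi^{x*a}=\Phi_\phi^{x}$ propagates along any word in the generators.
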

\begin{proof} It follows from the proof of Theorem 4.1 of \cite{CENS} that every coloring $C$ of $T$ by $X$ such  that $C(b_0) = x$ and $C(b_1) = x$ can be uniquely lifted to a coloring $C'$ of $T$  by $Q$ with $C'(b_0) = (1,x)$ and $C'(b_1) = (B_{\phi}(K,C), x)$ (see also Lemma~\ref{lem:cover}). 
Let ${\rm Col}_Q^{(1,x), (\lambda,x)}(T)$ denote the set of all colorings $C'$ of $T$ by $Q$ such that $C'(b_0) = (1,x)$ and $C'(b_1) = (\lambda,x)$ for some $\lambda \in \Lambda$.  This implies that
$$\Phi_\phi^x = \sum_{C \in {\rm Col}_X^{x, x}(T)} B_\phi(K,C) = \sum_{C' \in {\rm Col}_Q^{(1,x), (\lambda,x)}(T)} \pi_1(C'(b_1)), $$
where $\pi_1: (\lambda,x) \mapsto \lambda$.
There is an action of $\Lambda$ on $Q$ given by $\lambda (\mu, a) = (\lambda \mu,a)$. It is easy to see that this action commutes with all right translations $R_{(\tau,a)}$, $(\tau,a) \in Q$,  and hence commutes with all $\varphi \in {\rm Inn}(Q)$ (Cf. Lemma~\ref{coverings} (i)). Let $y \in X$, then since $Q$ is connected, there is a $\varphi \in {\rm  Inn}(Q)$ such that $\varphi(1,x) = (1,y)$. Now
 $$\varphi(\lambda,x) = \varphi( \lambda(1,x)) = \lambda \varphi (1,x) = \lambda (1,y) = (\lambda, y).$$
Clearly the mapping $C' \mapsto \varphi C'$ is a bijection from $ {\rm Col}_Q^{(1,x), (\lambda,x)}(T)$ to $ {\rm Col}_Q^{(1,y), (\lambda,y)}(T)$ and $\pi_1(C'(b_1)) = \pi_1(\varphi C'(b_1))$.
It follows that $\Phi_\phi^x = \Phi_\phi^y$. Since the set of all colorings of $K$ by $X$ is the disjoint union of the 
$ {\rm Col}_X^{x, x}(T)$  for $x \in X$, the result follows. 
\end{proof}

\begin{lemma}\label{CocycleInvariant2} Let $Q =\Lambda \times_\phi X$ be a connected  abelian extension of a quandle $X$ with respect to a 2-cocycle $\phi.$ Let $\pi$ be the projection of $Q$ onto $X$.   Let $T$ be a 1-tangle whose closure is $K$ with top arc $b_0$ and bottom arc $b_1$. Let $\Phi_\phi(K) = \sum_{\lambda \in \Lambda} n_\lambda \lambda$ and for some $x \in X$, let  $\mathscr{C}_0 = \mathscr{C}(Q,x)$ denote the set $C$ of colorings of $T$ such that $C(b_0) = e = (1,x)$ and 
$\pi C(b_1) =  x$ and let $\mathscr{C}_1 = {\rm Col}^e_Q(T) - \mathscr{C}(Q,x)$ then 
$$\Psi^e_Q(K) = \frac{1}{|X|}\sum_{C \in \mathscr{C}_0} n_\lambda (\lambda, x) + \sum_{C \in \mathscr{C}_1} C(b_1).$$
\end{lemma}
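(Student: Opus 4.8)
The plan is to split the full coloring set $\mathrm{Col}^e_Q(T)$ into the two pieces $\mathscr{C}_0$ and $\mathscr{C}_1$ exactly as in the statement, so that by the definition of $\Psi^e_Q(K)=\sum_{C\in \mathrm{Col}^e_Q(T)}C(b_1)$ we get, with no work at all,
\[
\Psi^e_Q(K)=\sum_{C\in \mathscr{C}_0}C(b_1)+\sum_{C\in \mathscr{C}_1}C(b_1).
\]
The second summand already appears verbatim in the claimed formula, so the entire content is to identify $\sum_{C\in \mathscr{C}_0}C(b_1)$ with $\frac{1}{|X|}\sum_{\lambda}n_\lambda(\lambda,x)$.

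First I would use Lemma~\ref{CocycleInvariant1} to describe $\mathscr{C}_0$ concretely. By definition a coloring $C\in\mathscr{C}_0$ satisfies $C(b_0)=(1,x)$ and $\pi C(b_1)=x$, so $C(b_1)=(\lambda,x)$ for some $\lambda\in\Lambda$; conversely, the lifting established in the proof of Lemma~\ref{CocycleInvariant1} (cf.\ also Lemma~\ref{lem:cover}) shows that the projection $\bar C=\pi C$ runs exactly over $\mathrm{Col}_X^{x,x}(T)$, each $\bar C$ lifting uniquely to a member of $\mathscr{C}_0$ with $C(b_1)=(B_\phi(K,\bar C),x)$. This gives a bijection $\mathscr{C}_0\leftrightarrow \mathrm{Col}_X^{x,x}(T)$, whence $\sum_{C\in\mathscr{C}_0}C(b_1)=\sum_{\bar C\in \mathrm{Col}_X^{x,x}(T)}(B_\phi(K,\bar C),x)$. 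Before regrouping I would check that each $(\lambda,x)$ genuinely lies in the fiber $F_e$, so that both sides live in the same module $\Z[F_e]$: a direct computation with the extension product $(\mu,b)*(\lambda,x)=(\mu\,\phi(b,x),\,b*x)$ shows $R_{(\lambda,x)}$ is independent of $\lambda$, hence $R_{(\lambda,x)}=R_{(1,x)}=R_e$, consistent with Lemma~\ref{lem:end}.

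Next I would group the sum by the value $\lambda=B_\phi(K,\bar C)$. Writing $\Phi_\phi^x(K)=\sum_\lambda m_\lambda\lambda$ with $m_\lambda=\#\{\bar C\in\mathrm{Col}_X^{x,x}(T):B_\phi(K,\bar C)=\lambda\}$, the identity $\Phi_\phi=|X|\,\Phi_\phi^x$ from Lemma~\ref{CocycleInvariant1} forces $n_\lambda=|X|\,m_\lambda$, so $m_\lambda=n_\lambda/|X|$ (and in particular $|X|\mid n_\lambda$, which legitimizes the division). Therefore $\sum_{C\in\mathscr{C}_0}C(b_1)=\sum_\lambda m_\lambda(\lambda,x)=\frac{1}{|X|}\sum_\lambda n_\lambda(\lambda,x)$, and adding back the $\mathscr{C}_1$-sum yields the stated formula. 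The argument is essentially bookkeeping layered on top of Lemma~\ref{CocycleInvariant1}; the genuinely delicate points are the two just flagged, namely confirming $(\lambda,x)\in F_e$ and justifying the factor $1/|X|$ via the divisibility $|X|\mid n_\lambda$. I expect the main obstacle to be purely notational: matching my regrouped sum $\frac{1}{|X|}\sum_\lambda n_\lambda(\lambda,x)$ against the statement's index $\sum_{C\in\mathscr{C}_0}$, and keeping the base point $x$ fixed consistently throughout the invocation of Lemma~\ref{CocycleInvariant1}.
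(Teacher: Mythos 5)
Your proof is correct and follows essentially the same route as the paper: split $\mathrm{Col}^e_Q(T)$ into $\mathscr{C}_0\sqcup\mathscr{C}_1$ and then invoke the lifting bijection from the proof of Lemma~\ref{CocycleInvariant1} to identify $\sum_{C\in\mathscr{C}_0}C(b_1)$ with $\sum_{\lambda}\frac{n_\lambda}{|X|}(\lambda,x)$. The paper states this second step in one line; you have merely supplied the bookkeeping (the bijection with $\mathrm{Col}_X^{x,x}(T)$, the divisibility $|X|\mid n_\lambda$, and the check that $(\lambda,x)\in F_e$), all of which is consistent with the intended argument.
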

\begin{proof} Since $ {\rm Col}^e_Q(T)$ is the disjoint union of $\mathscr{C}_0$ and $\mathscr{C}_1$, we have:
$$\Psi^e_Q(K) = \sum_{C \in \mathscr{C}_0} C(b_1) + \sum_{C \in \mathscr{C}_1} C(b_1).$$
By the proof of Lemma~\ref{CocycleInvariant1} we have
$$\sum_{C \in \mathscr{C}_0} C(b_1) =\sum_{\lambda \in \Lambda} \frac{ n_\lambda}{|X|} (\lambda, x) .$$ 
\end{proof}

\begin{corollary}
If $\Phi_\phi(K_1) \neq \Phi_\phi(K_2)$  for knots $K_1$ and $K_2$, then $\Psi^e_Q(K_1) \neq \Psi^e_Q(K_2)$.  
\end{corollary}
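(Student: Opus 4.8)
The plan is to read the corollary directly off Lemma~\ref{CocycleInvariant2} by projecting $\Psi^e_Q(K)$ onto a suitable $\Z$-submodule of $\Z[F_e]$ that isolates the cocycle contribution. Fix the connected abelian extension $Q = \Lambda \times_\phi X$, write $e = (1,x)$, and recall that by Lemma~\ref{lem:end} every coloring $C \in {\rm Col}^e_Q(T)$ has $C(b_1) \in F_e$. The first observation I would record is that the sub-fiber $\Lambda \times \{x\}$ is contained in $F_e$: since the right translation $R_{(\lambda,x)}$ sends $(\nu,a) \mapsto (\nu\phi(a,x), a*x)$, it depends only on the $X$-coordinate $x$ and not on $\lambda$, so $R_{(\lambda,x)} = R_{(1,x)} = R_e$ for every $\lambda \in \Lambda$.

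With this in hand, I would use the decomposition from Lemma~\ref{CocycleInvariant2},
$$\Psi^e_Q(K) = \frac{1}{|X|}\sum_{\lambda \in \Lambda} n_\lambda\, (\lambda, x) \; + \sum_{C \in \mathscr{C}_1} C(b_1),$$
and examine the supports of the two terms in the basis $F_e$. By construction $\mathscr{C}_0$ consists of the colorings with $\pi C(b_1) = x$ and $\mathscr{C}_1 = {\rm Col}^e_Q(T) - \mathscr{C}_0$ of those with $\pi C(b_1) \neq x$; hence each $C(b_1)$ appearing in the second sum lies in $F_e \setminus (\Lambda \times \{x\})$, whereas the first sum is supported entirely on $\Lambda \times \{x\}$. (Note that $|X|$ divides each $n_\lambda$ by Lemma~\ref{CocycleInvariant1}, so the first term indeed has integer coefficients.) The two terms therefore have disjoint supports.

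Let $\rho \colon \Z[F_e] \rightarrow \Z[\Lambda \times \{x\}]$ be the $\Z$-linear projection that fixes the basis elements of $\Lambda \times \{x\}$ and annihilates those in $F_e \setminus (\Lambda \times \{x\})$. By the disjointness of supports, $\rho(\Psi^e_Q(K)) = \frac{1}{|X|}\sum_{\lambda} n_\lambda\, (\lambda, x)$, and under the $\Z$-module isomorphism $\Z[\Lambda \times \{x\}] \cong \Z[\Lambda]$ induced by $(\lambda, x) \mapsto \lambda$ this image corresponds to $\frac{1}{|X|}\Phi_\phi(K)$. Consequently, if $\Phi_\phi(K_1) \neq \Phi_\phi(K_2)$ then $\frac{1}{|X|}\Phi_\phi(K_1) \neq \frac{1}{|X|}\Phi_\phi(K_2)$, so $\rho(\Psi^e_Q(K_1)) \neq \rho(\Psi^e_Q(K_2))$; since $\rho$ is a well-defined map, this forces $\Psi^e_Q(K_1) \neq \Psi^e_Q(K_2)$, as desired.

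The one genuinely substantive point — and what I expect to be the main obstacle — is the support bookkeeping: verifying that the $\mathscr{C}_1$-sum contributes nothing to the submodule $\Z[\Lambda \times \{x\}]$, so that the projection $\rho$ recovers exactly the cocycle part of $\Psi^e_Q(K)$. This rests entirely on the containment $\Lambda \times \{x\} \subseteq F_e$ together with the definition of $\mathscr{C}_0$ and $\mathscr{C}_1$ via the value of $\pi C(b_1)$; once these are in place the remainder is a routine application of Lemma~\ref{CocycleInvariant2}.
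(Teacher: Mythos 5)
Your proof is correct and matches the paper's intended derivation: the corollary is stated without proof precisely because it is meant to be read off from the decomposition in Lemma~\ref{CocycleInvariant2}, with the $\mathscr{C}_0$ part supported on $\Lambda\times\{x\}$ and the $\mathscr{C}_1$ part supported off it, so that projecting onto $\Z[\Lambda\times\{x\}]$ recovers $\frac{1}{|X|}\Phi_\phi(K)$. Your explicit verification that $\Lambda\times\{x\}\subseteq F_e$ and that the two supports are disjoint is exactly the bookkeeping the paper leaves implicit.
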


\begin{theorem}\label{AbelExt} Let $Q =\Lambda \times_\phi X$ be a connected  abelian extension of quandle $X$ corresponding to the 2-cocycle $\phi.$  
If  the projection $\pi: Q \rightarrow X$, $(\lambda,x) \mapsto x$, is equivalent to  ${\rm inn}: Q \rightarrow {\rm inn}(Q)$, then for $e = (1,x_0)$  we have for all knots $K$:
 $$\Psi_Q^e(K)= \frac{1}{|X|} \Phi_\phi(K)$$
if one identifies the fiber $F_e$ with $\Lambda$  via $\lambda \mapsto (\lambda,x_0)$.  In particular this formula holds if $X$  is faithful.

\end{theorem}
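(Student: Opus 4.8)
The plan is to start from Lemma~\ref{CocycleInvariant2}, which already writes $\Psi^e_Q(K) = \frac{1}{|X|}\sum_{\lambda \in \Lambda} n_\lambda (\lambda, x_0) + \sum_{C \in \mathscr{C}_1} C(b_1)$, and to show that under the equivalence hypothesis the leftover term $\sum_{C \in \mathscr{C}_1} C(b_1)$ vanishes, i.e. $\mathscr{C}_1 = \emptyset$. Recall that $\mathscr{C}_1$ consists of those $C \in {\rm Col}^e_Q(T)$ with $\pi C(b_1) \neq x_0$. By Lemma~\ref{lem:end}, every coloring $C$ with $C(b_0)=e$ satisfies $R_{C(b_1)} = R_e$, that is $C(b_1) \in F_e = {\rm inn}^{-1}(R_e)$. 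Hence it suffices to prove the identity of fibers $F_e = \pi^{-1}(x_0) = \Lambda \times \{x_0\}$: then every $C(b_1)$ lies in $\pi^{-1}(x_0)$, forcing $\pi C(b_1) = x_0$ and $\mathscr{C}_1 = \emptyset$. Once this is shown, the surviving term $\frac{1}{|X|}\sum_\lambda n_\lambda(\lambda,x_0)$ becomes, under the identification $\lambda \mapsto (\lambda, x_0)$ of $F_e$ with $\Lambda$, exactly $\frac{1}{|X|}\sum_\lambda n_\lambda \lambda = \frac{1}{|X|}\Phi_\phi(K)$, as desired.

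To establish $F_e = \pi^{-1}(x_0)$ I would argue by inclusion together with a cardinality count. For the inclusion $\pi^{-1}(x_0) \subseteq F_e$, a direct computation with the extension product suffices: for any $(\lambda, x_0)$ and any $(\mu, z) \in Q$ one has $(\mu,z)*(\lambda,x_0) = (\mu\,\phi(z,x_0), z*x_0)$, which is independent of $\lambda$, so $R_{(\lambda,x_0)} = R_{(1,x_0)} = R_e$ and $(\lambda,x_0) \in F_e$. Since $|\pi^{-1}(x_0)| = |\Lambda|$, it then remains to show $|F_e| = |\Lambda|$. This is exactly where the equivalence of $\pi$ with ${\rm inn}$ enters: the defining commutative square supplies isomorphisms $i\colon Q \to Q$ and $j\colon X \to {\rm inn}(Q)$ with $j\pi = {\rm inn}\, i$, whence ${\rm inn} = j\pi i^{-1}$ and $F_e = {\rm inn}^{-1}(R_e) = i\big(\pi^{-1}(j^{-1}(R_e))\big)$. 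As $i$ and $j$ are bijections and every fiber of $\pi$ has exactly $|\Lambda|$ elements, this yields $|F_e| = |\Lambda|$, and combined with the inclusion gives $F_e = \pi^{-1}(x_0)$.

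For the final assertion, when $X$ is faithful I would verify $F_e = \pi^{-1}(x_0)$ directly rather than invoking the square: from the computation above, $R_{(\lambda,x)} = R_{(\mu,y)}$ holds iff $z*x = z*y$ for all $z \in X$ and $\phi(z,x)=\phi(z,y)$ for all $z$; the first condition says ${\rm inn}_X(x) = {\rm inn}_X(y)$, which by faithfulness forces $x=y$, so the fibers of ${\rm inn}_Q$ coincide with those of $\pi$, and in particular $F_e = \pi^{-1}(x_0)$. Equivalently, this exhibits $\pi$ and ${\rm inn}$ as equivalent via $i = {\rm id}$ and the induced bijection $j$, so the faithful case is a special instance of the hypothesis. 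I expect the main obstacle to be the bookkeeping of the equivalence square --- correctly reading off that it equates the fibers of ${\rm inn}$ with those of $\pi$ up to the automorphism $i$, so that the uniform fiber size $|\Lambda|$ of $\pi$ transfers to ${\rm inn}$; the rest is the short translation-computation and the already-established Lemma~\ref{CocycleInvariant2}.
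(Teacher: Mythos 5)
Your proof is correct and follows essentially the same route as the paper: reduce via Lemma~\ref{CocycleInvariant2} to showing $\mathscr{C}_1=\emptyset$, which by Lemma~\ref{lem:end} amounts to the fiber identity $F_e=\Lambda\times\{x_0\}$, and handle the faithful case by the same direct computation with the extension product. The only divergence is in how the equivalence hypothesis is unpacked --- the paper extracts an isomorphism $\varphi:{\rm inn}(Q)\to X$ satisfying $\varphi\circ{\rm inn}=\pi$ and argues by well-definedness (implicitly taking the automorphism of $Q$ in the equivalence square to be the identity), whereas your inclusion $\pi^{-1}(x_0)\subseteq F_e$ plus the cardinality count $|F_e|=|\Lambda|$ works for an arbitrary equivalence square, which is, if anything, slightly more careful.
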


\begin{proof} From  Lemma~\ref{CocycleInvariant2} it suffices to show that  ${\rm Col}^e_Q(T) = \mathscr{C}_0$ when $\pi$ is equivalent to  ${\rm inn}$.  Let $T$ be a 1-tangle with top arc $b_0$ and bottom arc $b_1$ whose closure is $K$. Now $e = (1,x_0)$ so it suffices to show that if $C$ is a coloring of $T$ such that $C(b_0) = (1,x_0)$ then $C(b_1) = (\lambda,x_0)$ for some $\lambda \in \Lambda$.  We know from \cite{CS} that in any case if $C(b_1) = (\lambda,x)$, $x \in X$, then
$R_{(1,x_0)} = R_{(\lambda,x)}.$
If  $\pi$ is equivalent to ${\rm inn}$ there is an isomorphism $\varphi: {\rm inn}(Q) \rightarrow X$ such that 
$\varphi( {\rm inn}(\lambda,x)) = \pi(\lambda,x) = x$.  And since $\varphi$ must be well-defined we must have that if $R_{(\lambda,x)} = R_{(1,x_0)}$ then $x = x_0$
by Lemma~\ref{lem:end}. 
This shows that ${\rm Col}^e_Q(T)  = \mathscr{C}_0,$ as desired.
 We also have that the fiber ${\rm inn}^{-1}(R_e) = F_e =  \{ (\lambda,x_0) : \lambda \in \Lambda \}$, which justifies identifying $\lambda$ with $(\lambda,x_0)$.

Suppose that $X$ is faithful. Then for $a,b \in X$ we have $R_a = R_b \iff a = b$. By \cite{CS} Lemma 4.1 it suffices to prove that the mapping $R_{(\lambda,a)} \mapsto a$ is a bijection from ${\rm inn}(Q)$ to $X$. To see that this is a bijection observe that 
$R_{(\lambda,a)} = R_{(\mu,b)}$ $\iff$
$ (\gamma,c)*(\lambda,a) = (\gamma,c)*(\mu,b) \  \mbox{for all } (\gamma,c)$ $\iff$
$ (\gamma \phi(c,a),c*a) = (\gamma \phi(c,b),c*b)\  \mbox{for all } (\gamma,c)$ 
$\iff a = b$.
\end{proof}

\subsection{Relation to Eisermann's knot coloring polynomial} \label{sec:colorpoly} 

We recall the definition from \cite{Eis2} of Eisermann's {\it knot coloring polynomial} $P_G^x(K)$ of a knot $K$.  Let $\pi_K$ be the knot group, that is, the fundamental group of the complement of knot $K$. Let $m_K$ be the meridian of $K$ and
let $l_K$ be the longitude of $K$.  For a pointed finite group $(G,x)$, 
$$
P_G^x(K) = \sum_{\rho} \rho(l_K),
$$
where the sum is taken over all homomorphisms $\rho: \pi_K \rightarrow G$ with $\rho(m_K) = x$.  It turns out (see \cite{Eis2}) that the values $\rho(l_K)$ lie in the {\it longitude group} $\Lambda = C(x) \cap G'$. Thus $P^x_G(K)$ lies in the group ring $\Z[\Lambda]$.  Recall that $\Psi^e_Q(K)$ lies in the free $\Z$-module $\Z[F_e]$  with basis $F_e = {\rm inn}^{-1}(R_e)$.

In this section we show that if $Q$ is a connected generalized Alexander quandle, $e \in Q$, $G ={\rm  Inn}(Q)$ and $x = R_e$
then the invariants $\Psi^e_Q(K)$ and $P_G^x(K)$ are equivalent.  Here for $g \in {\rm Inn}(Q)$ and $a \in Q$
we write $ag$ for the image of $a$ under the automorphism $g$.

\begin{lemma} \label{lem:fiber}
Let $Q$ be a connected generalized Alexander quandle, $e \in Q$, $G ={\rm  Inn}(Q)$ and $x = R_e$.
Then the mapping $\pi_e: C(x) \cap G' \rightarrow {\rm inn}^{-1} (R_e)$ given by 
$\pi_e(g) = eg$, is a bijection.  Hence $\pi_e$ induces a $\Z$-module isomorphism $(\pi_e)_*: \Z[\Lambda] \rightarrow \Z[F_e]$,
$\sum n_g g \mapsto \sum n_{g} \pi_e(g).$
\end{lemma}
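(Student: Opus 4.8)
The plan is to identify the map $\pi_e\colon g\mapsto eg$ with the orbit map of the action of $G={\rm Inn}(Q)$ on $Q$, and to exhibit $\Lambda = C(x)\cap G'$ as a transversal of $\langle x\rangle$ in $C(x)$, where $x=R_e$. Throughout I use the conjugation identity $R_{eg}=gR_eg^{-1}$, valid for every $g\in{\rm Aut}(Q)$ (writing $eg$ for the image of $e$ under $g$). The first step is to check that $\pi_e$ lands in $F_e$: for $g\in\Lambda\subseteq C(x)$ we get $R_{eg}=gR_eg^{-1}=gxg^{-1}=x=R_e$, so $eg\in{\rm inn}^{-1}(R_e)=F_e$. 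It is also worth recording the concrete picture at $e$ equal to the identity of $G_0$, where $R_e=f$ and $F_e={\rm Fix}(f)$, so that $\pi_e$ becomes the identification of an inner automorphism with its translation part.

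Next I would extract two structural consequences of the hypotheses. Because $Q$ is connected, the right translations $R_a$ lie in a single conjugacy class of $G$, hence have the same image in the abelianization; therefore $G/G'$ is cyclic, generated by the class of $x$, and in particular $G=\langle x\rangle G'$. Using the description of the inner group of a connected generalized Alexander quandle $Q={\rm GAlex}(G_0,f)$ as a semidirect product $G\cong G_0\rtimes\langle f\rangle$ (in which $x=R_e$ is conjugate to $f$), two facts follow. First, the order of $x$ in $G$ equals the order of $f$, which equals the order of the image of $x$ in $G/G'$; this forces the restriction of $G\to G/G'$ to $\langle x\rangle$ to be injective, i.e.\ $\langle x\rangle\cap G'=\{1\}$. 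Second, the $G$-stabilizer of $e$ is exactly $\langle x\rangle$: normalizing so that $e$ is the identity of $G_0$, an inner automorphism fixes $e$ precisely when its $G_0$-translation part is trivial, hence is a power of $f$, and the general case follows by conjugation since $Q$ is connected.

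With these facts the conclusion is bookkeeping. The subgroup $C(x)$ preserves $F_e$ and acts transitively on it: every $a\in F_e$ equals $eg$ for some $g\in G$ by connectedness, and $R_a=R_e$ gives $gxg^{-1}=x$, i.e.\ $g\in C(x)$; the point-stabilizer is $C(x)\cap{\rm Stab}(e)=\langle x\rangle$. Thus $\pi_e$ induces a bijection $C(x)/\langle x\rangle\to F_e$. Finally $\Lambda=C(x)\cap G'$ meets each coset of $\langle x\rangle$ in $C(x)$ exactly once: given $c\in C(x)$ write $c=x^jg'$ with $g'\in G'$ (possible since $G=\langle x\rangle G'$); then $g'=x^{-j}c\in C(x)$, so $g'\in\Lambda$, and $g'^{-1}c=x^j\in\langle x\rangle$ because $g'$ centralizes $x$, giving $c\langle x\rangle=g'\langle x\rangle$ and hence surjectivity, while $\langle x\rangle\cap G'=\{1\}$ gives uniqueness. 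Composing the two bijections shows $\pi_e|_\Lambda\colon\Lambda\to F_e$ is a bijection, and a bijection of bases automatically extends to the asserted $\Z$-module isomorphism $(\pi_e)_*\colon\Z[\Lambda]\to\Z[F_e]$.

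The one genuinely substantive point is the surjectivity of $\pi_e|_\Lambda$, equivalently the pair $G=\langle x\rangle G'$ and $\langle x\rangle\cap G'=\{1\}$ that makes $\Lambda$ a full transversal, and this is exactly where both hypotheses are indispensable. For a general connected quandle $x=R_e$ may have strictly larger order in $G$ than in $G/G'$, so that $\langle x\rangle\cap G'\neq\{1\}$ and the count fails; it is the semidirect structure of the inner group of a generalized Alexander quandle that makes the order of $R_e$ agree with that of its abelianized image. Connectedness cannot be dropped either: for a disconnected example such as ${\rm GAlex}(S_3,f)$ with $f$ conjugation by a transposition, every $R_a$ preserves the parity of a permutation, so $Q$ is disconnected and the surjectivity above (indeed the statement) breaks down, with ${\rm Fix}(f)$ no longer entirely in the image of $\pi_e$.
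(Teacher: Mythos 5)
Your proof is correct, but it takes a genuinely different route from the paper's. The paper leans on Lemma~B.4 (the Hulpke--Stanovsk\'y--Vojt\v{e}chovsk\'y representation): since a generalized Alexander quandle admits a homogeneous representation over a group of order $|Q|$, minimality of $G'={\rm Inn}(Q)'$ forces $|G'|=|Q|$ and ${\rm Stab}_{G'}(e)=\{1\}$, so $g\mapsto eg$ is already a bijection from all of $G'$ onto $Q$; a one-line chain of equivalences then shows that, for $g\in G'$, $g\in C(x)$ iff $eg\in{\rm inn}^{-1}(R_e)$. You instead work inside the holomorph: the explicit form $a\mapsto f^k(a)d$ of elements of ${\rm Inn}({\rm GAlex}(G_0,f))$ gives ${\rm Stab}_G(e)=\langle x\rangle$ and $\langle x\rangle\cap G'=\{1\}$, after which $C(x)$ acts transitively on $F_e$ with stabilizer $\langle x\rangle$ and $\Lambda=C(x)\cap G'$ is exhibited as a transversal of $\langle x\rangle$ in $C(x)$. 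Your version is self-contained (it never needs the minimality clause of the HSV lemma) and makes the group-theoretic meaning of $\Lambda$ transparent, which matches Eisermann's longitude-group picture; the paper's version is shorter given the appendix machinery and yields the stronger intermediate fact that $\pi_e$ is injective on all of $G'$, not merely on $\Lambda$. One presentational caveat: you invoke the semidirect decomposition $G\cong G_0\rtimes\langle f\rangle$ to derive the stabilizer computation and the injectivity of $\langle x\rangle\to G/G'$, but the cleanest logical order is the reverse --- the containment ${\rm Inn}(Q)\subseteq\rho(G_0)\rtimes\langle f\rangle$, transitivity, and your stabilizer computation together \emph{prove} the semidirect structure (and then $G'\subseteq\rho(G_0)$ gives $\langle x\rangle\cap G'=\{1\}$). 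Since every ingredient of that derivation already appears in your argument, this is a matter of arrangement rather than a gap.
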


\begin{proof} Let $Q = {\rm GAlex}(G_0,f) \cong {\mathscr H}(G_0,\{1\},f)$ where $1$ is the identity of the group $G_0$.   By  Lemma~\ref{HSV} in the Appendix the mapping
 $$ {\mathscr H}(G',{\rm Stab}_{G'}(e) , \phi) \rightarrow Q , \ \  {\rm Stab}_{G'}(e)g \mapsto eg$$
 where $\phi(g) = R_e^{-1}gR_e$ is an isomorphism of quandles.  By the same lemma since $G'$ has the smallest order  among all groups such that $Q \cong {\mathscr H}(G_1,H_1,f_1)$ it follows that $|Q| =|G_0| = |G'|$ and ${\rm Stab}_{G'}(e) = \{1\}$.

Since ${\rm Stab}_{G'}(e) = \{1\}$ it is clear that $\pi_e: g \mapsto eg$ is a bijection from $G'$ to $Q$. It remains to show that
$$\pi_e (C(x) \cap G') = {\rm inn}^{-1} (R_e).$$
For $g \in G'$ we have
 $g \in C(x) \cap G'$  $\iff$
  $gR_e = R_eg$$ \iff$
 $ugR_e = uR_eg$,  for all $u \in Q$ $\iff$
 $ug*e = (u*e)g = ug*eg$ for all $u \in Q$ $\iff$
 $v*e = v*eg$ for all $v \in Q$ $\iff$
$R_e = R_{eg}$ $\iff$
 $eg \in {\rm inn}^{-1} (R_e)$.
\end{proof}

\begin{theorem}\label{Psi=P}
If  $Q$ be a connected generalized Alexander quandle, $e \in Q$, $G = {\rm Inn}(Q)$ and $ x = R_e$
 then for a knot $K$
$$\Psi^e_Q(K) = (\pi_e)_*(P^x_G(K)),$$
where  $(\pi_e)_*$ is the $\Z$-module isomorphism defined in Lemma~\ref{lem:fiber}.  Thus $P_G^x$ and $\Psi^e_Q$ distinguish the same pairs of knots in this case.
\end{theorem}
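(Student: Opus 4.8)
The plan is to exhibit a bijection between the colorings summed in $\Psi^e_Q(K)$ and the representations summed in $P^x_G(K)$, and then to check that this bijection sends each bottom color $C(b_1)$ to $\pi_e(\rho(l_K))$; summing term by term then yields the claimed equality at once.

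First I would set up the correspondence $C \mapsto \rho_C$. Given a coloring $C$ of the tangle diagram $T$ with $C(b_0)=e$, define $\rho_C\colon \pi_K \to G={\rm Inn}(Q)$ on Wirtinger generators by $\rho_C(g_\alpha)=R_{C(\alpha)}$. The quandle identity $R_{a*b}=R_bR_aR_b^{-1}$ matches the Wirtinger relation at each crossing, so $\rho_C$ is a homomorphism, and $\rho_C(m_K)=R_{C(b_0)}=R_e=x$. Reading colors along the oriented strand gives $C(\alpha)=\rho_C(u_\alpha)(e)$, where $u_\alpha\in\pi_K$ is the word picked up traversing from $b_0$ to $\alpha$; since every color is thereby recovered from $\rho_C$, the map is injective. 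Conversely, for any $\rho$ with $\rho(m_K)=x$ the rule $C(\alpha):=\rho(u_\alpha)(e)$ defines a coloring: using $R_{g(a)}=gR_ag^{-1}$ one has $R_{C(\alpha)}=\rho(u_\alpha)\,x\,\rho(u_\alpha)^{-1}=\rho(u_\alpha m_K u_\alpha^{-1})=\rho(g_\alpha)$, which verifies the crossing conditions and shows $\rho_C=\rho$. Thus $C\mapsto\rho_C$ is a bijection. (This is essentially the correspondence of Eisermann~\cite{Eis2}; the simply transitive action of $G'$ on $Q$ from the proof of Lemma~\ref{lem:fiber} guarantees that reading off colors is consistent.)

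The crucial step is the identity $C(b_1)=\pi_e(\rho_C(l_K))$. Traversing $T$ from $b_0$ to $b_1$ and applying $R_{C(b)}^{\pm1}$ at each undercrossing with over-arc $b$ yields $C(b_1)=\rho_C(\ell)(e)$, where $\ell=\prod_i g_{b_i}^{\epsilon_i}$ is the blackboard-framed longitude. The preferred longitude differs by the writhe $w$, namely $\ell=l_K\,m_K^{\,w}$, so $\rho_C(\ell)(e)=\rho_C(l_K)\bigl(x^{\,w}(e)\bigr)$. Here the framing ambiguity disappears: by idempotency $x(e)=R_e(e)=e*e=e$, hence $x^{\,w}(e)=e$ and $C(b_1)=\rho_C(l_K)(e)$. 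Because $l_K$ commutes with $m_K$ and is null-homologous, $\rho_C(l_K)\in C(x)\cap G'=\Lambda$, so $\rho_C(l_K)(e)=e\cdot\rho_C(l_K)=\pi_e(\rho_C(l_K))\in F_e$, exactly the quantity recorded by $\Psi^e_Q$.

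Summing $C(b_1)=\pi_e(\rho_C(l_K))$ over the bijection then gives
\[
\Psi^e_Q(K)=\sum_{C\in{\rm Col}^e_Q(T)}C(b_1)=\sum_{\rho}\pi_e(\rho(l_K))=(\pi_e)_*\Bigl(\sum_{\rho}\rho(l_K)\Bigr)=(\pi_e)_*(P^x_G(K)),
\]
and the final assertion that $P^x_G$ and $\Psi^e_Q$ distinguish the same pairs follows because $(\pi_e)_*$ is an isomorphism. I expect the main obstacle to be establishing the bijection in the second paragraph—specifically, checking that the reconstruction $C(\alpha):=\rho(u_\alpha)(e)$ is a well-defined coloring and is genuinely inverse to $C\mapsto\rho_C$, where connectedness and the simply transitive $G'$-action do the real work—while the remaining steps, including the framing cancellation via idempotency, are direct computations.
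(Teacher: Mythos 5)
Your argument is correct and arrives at the same identity as the paper, but by a genuinely more self-contained route. The paper's proof is a one-line appeal to Lemma 3.12 of Eisermann \cite{Eis2}, applied to the inner augmentation ${\rm inn}\colon Q \rightarrow {\rm Inn}(Q)$, together with the bijectivity of $\pi_e$ supplied by Lemma~\ref{lem:fiber}; you instead reprove the content of Eisermann's lemma directly: the bijection between colorings $C$ of $T$ with $C(b_0)=e$ and homomorphisms $\rho\colon \pi_K \rightarrow {\rm Inn}(Q)$ with $\rho(m_K)=R_e$ (via the Wirtinger presentation and the identity $R_{a*b}=R_bR_aR_b^{-1}$), and the key computation $C(b_1)=\pi_e(\rho_C(l_K))$, including the disposal of the framing correction $m_K^{w}$ by idempotency. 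Your version has the advantage of making visible exactly where each hypothesis is used — in particular that the generalized Alexander hypothesis enters only through Lemma~\ref{lem:fiber}, to guarantee that $\pi_e$ and hence $(\pi_e)_*$ is a bijection rather than merely a surjection, which is precisely the point of the remark the paper makes after the theorem. What the paper's citation buys is brevity and the assurance that the orientation and longitude conventions have been checked once and for all in \cite{Eis2}; what your proof buys is independence from that reference and a transparent accounting of the hypotheses. One small point worth adding for completeness: the well-definedness of $\rho_C$ on the generator of the closure arc (equivalently, $R_{C(b_0)}=R_{C(b_1)}$) is exactly Lemma~\ref{lem:end}, which deserves an explicit mention in your second paragraph.
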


\begin{proof} This is an immediate consequence of Lemma 3.12 of \cite{Eis2} using the inner augmentation ${\rm inn}: Q \rightarrow G = {\rm Inn}(Q)$  for $Q$ and the fact from  Lemma~\ref{lem:fiber} above that $\pi_e$ is a bijection from   $ \Lambda = C(x) \cap G'$ to $ F_e = {\rm inn}^{-1} (R_e)$.
Note that in Lemma 3.12 (\cite{Eis2} ) the notation $e^g$ is used in place of the notation $eg$ used here.
\end{proof}

\begin{remark} 
{\rm 
Note that the proof of Theorem~\ref{Psi=P}  fails for an arbitrary connected quandle $Q$. It depends on the fact that $\pi_e$ is a bijection. This will not be the case if in the action of ${\rm Inn}(G)'$ on $Q$ the stabilizer of an element $e \in Q$ is not equal to $\{1\}$. On the other hand, $(\pi_e)_*$ will be a $\Z$-module epimorphism in all cases. So it is possible that $P_G^x$ is a stronger invariant than $\Psi^e_Q$, but we have
no computational evidence for this.  This comparison depends on the inner augmentation. There may be other augmentations for other quandles.
}
\end{remark}

 \section{Non-faithful connected quandles }  \label{sec:nonfaith} 

 If we wish the invariant $\Psi^e_Q$ to be distinct from ${\rm Col}_Q$ we are constrainted to use only quandles that are not faithful. Leandro Vendramin in \cite{rig} has found all connected quandles of order $ \leq 47$. There are 790 such quandles.  We call these {\it Rig quandles}. In \cite{rig} the matrix of the $i$-th quandle of order $n$ is denoted by $SmallQuandle(n,i)$. We use the notation $Q(n,i)$ to denote the transpose of the matrix $SmallQuandle(n,i)$, thus changing Rig's left distributive quandles to the right distributive quandles used in knot theory.  Of the 790 Rig quandles only 66 are not faithful. It was shown in \cite{CSV}, Proposition 6.1,  that of the 66 all are abelian extensions except for the three quandles $Q(30,4), Q(36,58), $ and $Q(45,29)$.  Furthermore of the 66 non-faithful Rig quandles, 58 are abelian extensions by $\Z_2$.  

In our computations the 66 non-faithful Rig quandles did not suffice to distinguish $K$ from $s(K)$ when $K \neq s(K)$, $s \in \{r,m,rm\}$ for the prime knots with at most 12 crossings. We have in addition, 9 abelian extensions of orders greater than 47 that were constructed for use in \cite{CSV}. 
 To find more non-faithful connected quandles we constructed using GAP several thousand connected generalized Alexander quandles. As shown in Appendix B, it suffices to run through the small groups  $G$ in GAP,  for each group $G$ compute representatives $f$ of the conjugacy classes in ${\rm Aut}(G)$, and construct ${\rm GAlex}(G,f).$
When ${\rm GAlex}(G,f)$ is connected then as shown in Appendix B, it is an abelian extension of ${\rm inn}({\rm GAlex}(G,f))$  if and only if the subgroup ${\rm Fix}(G,f) = \{ x \in G \ | \ f(x) = x \}$  is abelian.
We give a proof of the following in Appendix B. Note that in this theorem ${\rm Fix}(G,f)$ need not be abelian.

\begin{theorem} \label{Extension} If $\Lambda$ is a subgroup of ${\rm Fix}(G,f)$ where $f \in {\rm Aut}(G)$ then  $${\rm GAlex}(G,f) \cong  \Lambda \times_\phi {\mathscr H}(G,\Lambda,f)$$ and the projection $\pi : \Lambda \times_\phi {\mathscr H}(G,\Lambda,f)  \rightarrow {\mathscr H}(G,\Lambda,f)$ given by $(\lambda,\Lambda g) \mapsto \Lambda g$ is equivalent to $p_\Lambda : {\rm GAlex}(G,f) \rightarrow  {\mathscr H}(G,\Lambda,f)$ defined by $p_\Lambda (g) = \Lambda g$.
 Moreover if $\Lambda = {\rm Fix}(G,f)$ then $\pi$ is equivalent to {\rm inn}.
\end{theorem}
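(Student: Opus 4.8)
# Proof Proposal for Theorem~\ref{Extension}

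The plan is to verify directly that the map $(\lambda, \Lambda g) \mapsto \lambda \cdot g$ (using the group product in $G$) realizes the claimed isomorphism ${\rm GAlex}(G,f) \cong \Lambda \times_\phi {\mathscr H}(G,\Lambda,f)$, and then to check the two equivalence claims by constructing the relevant commutative squares. The first thing I would do is pin down the correct $2$-cocycle $\phi$. Since $\Lambda \subseteq {\rm Fix}(G,f)$, every element of $\Lambda$ is fixed by $f$, and I expect $\phi$ to be defined on pairs of cosets $(\Lambda a, \Lambda b)$ by a formula of the shape $\phi(\Lambda a, \Lambda b) = s(\Lambda a) \, f(s(\Lambda a)^{-1} s(\Lambda b)) \, s(\Lambda b)^{-1} \cdot (\text{correction})$, where $s$ is a fixed set-theoretic section of the quotient $G \to \{\Lambda g\}$. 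Concretely, I would show that for each $g \in G$ there is a unique $\lambda \in \Lambda$ and unique coset representative so that $g = \lambda \, s(\Lambda g)$, which sets up the bijection $G \to \Lambda \times \{\Lambda g : g \in G\}$; the point of requiring $\Lambda \subseteq {\rm Fix}(G,f)$ is precisely that $\Lambda$ is central enough (normal and $f$-invariant) for this decomposition to interact correctly with the quandle operation $x * y = f(xy^{-1})y$.

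Next I would carry out the main computation: transport the GAlex product $x * y = f(xy^{-1})y$ through the bijection and check that it becomes exactly $(\lambda, \Lambda a) * (\mu, \Lambda b) = (\lambda\, \phi(\Lambda a, \Lambda b), \Lambda a * \Lambda b)$, where $\Lambda a * \Lambda b = \Lambda f(ab^{-1})b$ is the ${\mathscr H}(G,\Lambda,f)$ product. Writing $x = \lambda\, s(\Lambda a)$ and $y = \mu\, s(\Lambda b)$ and using $f(\lambda) = \lambda$, $f(\mu)=\mu$ (since $\lambda,\mu \in {\rm Fix}(G,f)$), I would expand $f(xy^{-1})y$ and collect the $\Lambda$-part against the coset part. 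This also forces the definition of $\phi$ and simultaneously verifies that the resulting $\phi$ is a genuine quandle $2$-cocycle — though I would note that, given the abelian-extension framework already set up in Section~\ref{sec:cocyinv}, the cocycle condition follows automatically once the product formula is shown to define a quandle isomorphic to a known quandle, so I would not grind through the cocycle identity by hand.

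For the equivalence of the projection $\pi$ with $p_\Lambda$, I would exhibit the isomorphism $i : {\rm GAlex}(G,f) \to \Lambda \times_\phi {\mathscr H}(G,\Lambda,f)$ constructed above together with the identity on ${\mathscr H}(G,\Lambda,f)$, and check that the square
\[
\begin{tikzcd}
{\rm GAlex}(G,f) \arrow{r}{p_\Lambda} \arrow[swap]{d}{i} & {\mathscr H}(G,\Lambda,f) \arrow{d}{=} \\
\Lambda \times_\phi {\mathscr H}(G,\Lambda,f) \arrow{r}{\pi} & {\mathscr H}(G,\Lambda,f)
\end{tikzcd}
\]
commutes, which amounts to the bookkeeping identity $\pi(i(g)) = \Lambda g = p_\Lambda(g)$ — immediate from the decomposition $g = \lambda\, s(\Lambda g)$. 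Finally, for the last sentence I would specialize to $\Lambda = {\rm Fix}(G,f)$ and show $\pi$ is equivalent to ${\rm inn}$. Here I expect to invoke the characterization that, for a connected GAlex quandle, the fibers of ${\rm inn}$ are exactly the cosets of ${\rm Fix}(G,f)$; that is, $R_a = R_b$ in ${\rm GAlex}(G,f)$ iff $\Lambda a = \Lambda b$ when $\Lambda = {\rm Fix}(G,f)$. Establishing this fiber description is where I expect the real work to lie — I would prove it via the right-translation computation $R_a = R_b \iff f(xa^{-1})a = f(xb^{-1})b$ for all $x \iff ba^{-1} \in {\rm Fix}(G,f)$, using $f$-invariance — and once it is in hand the equivalence of $\pi$ with ${\rm inn}$ follows by matching ${\mathscr H}(G,{\rm Fix}(G,f),f)$ with ${\rm inn}({\rm GAlex}(G,f))$ under inner representation. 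The main obstacle is thus isolating and verifying this fiber/kernel identification cleanly, since everything else is direct substitution into the defining formulas.
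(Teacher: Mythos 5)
Your proposal is correct and takes essentially the same route as the paper: the paper merely packages your direct computation into two appendix results, Lemma~\ref{extension} (which shows $p_\Lambda$ is a covering on which $\Lambda$ acts freely and transitively on each fiber, and settles the $\Lambda={\rm Fix}(G,f)$ case via the same fiber computation $R_a=R_b \iff ba^{-1}\in{\rm Fix}(G,f)$ that you identify as the crux) and Theorem~\ref{ExtCriterion} (the general covering-plus-section construction, with exactly your cocycle defined by $s(\Lambda a)*s(\Lambda b)=\phi(\Lambda a,\Lambda b)\,s(\Lambda a*\Lambda b)$). One small correction: $\Lambda$ need not be normal in $G$ (in Example~\ref{example}, ${\rm Fix}(A_n,f)\cong S_{n-2}$ is not normal in the simple group $A_n$); fortunately your argument never actually uses normality, only the right-coset decomposition $g=\lambda\, s(\Lambda g)$ and the fact that $f$ fixes $\Lambda$ pointwise.
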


This allows us to compute many cocycle invariants $\Phi_\phi(K) = \Psi_{ {\rm GAlex}(G,f)}^e(K)$ without finding explicit cocycles.

Note that in case  $\Lambda ={\rm Fix}(G,f)$ is not abelian ${\rm GAlex}(G,f)$ has the form $Q=\Lambda \times_\phi X$
for a {\it non-abelian cocycle}.  From this we see that in this case $\Psi^e_Q(K)$ may be considered
as an element of the group ring $\Z[\Lambda]$ of the non-abelian group $\Lambda$ and thus a generalization of the 2-cocycle invariant for abelian groups.

\section{Computational results}  \label{sec:compute} 

As mentioned in Section~\ref{sec:intro}, 
 in  \cite{CESY} it was shown that  there are 26 quandles that suffice to distinguish modulo the  group $\mathscr{G}  = \{1, m, r, rm \}$,  all pairs of the 2977 prime knots with crossing number at most 12 using the number ${\rm Col}_Q(K)$ of colorings of knot $K$. This holds also for the invariant $\Psi^e_Q$ since the coefficient of $e$ in $\Psi^e_Q(K)$ is ${\rm Col}_Q(K)/|Q|$. So to verify our conjecture that $\Psi^e_Q$ is a complete invariant for prime knots with at most 12 crossings it suffices to find for each such knot $K$ for which $K \neq s(K)$ for some $ s \in \mathscr{G}$ a quandle $Q$ such that $\Psi^e_Q(K) \neq \Psi^e_Q(s(K))$.

For knots $K$ and $K'$, we write $K = K'$ to denote that there is an orientation preserving homeomorphism of 
 $\Sym^3$ that takes $K$ to $K'$ preserving the orientations of $K$ and $K'$. We say that a knot has {\it symmetry} $s \in \{ m, r, rm \} $ if $K = s(K)$.  As in the definition of {\it symmetry type} in
\cite{KI} we say that  a knot $K$ is
\begin{enumerate}
\setlength{\itemsep}{-3pt}
\item { {\it reversible} if the only symmetry it has is $r$},
\item { {\it negative amphicheiral} if the only symmetry it has is $rm$},
\item {  {\it positive amphicheiral} if the only symmetry it has is $m$},
\item {{\it chiral} if it has none of these symmetries},
\item {{\it fully amphicheiral} if has all three symmetries $m, r, rm$}.
\end{enumerate}

 The symmetry type of each prime oriented knot on at most 12 crossings is given at \cite{KI}. Thus each of the $2977$ knots $K$ given there represents  $1$, $2$ or $4$ knots depending on the symmetry type. 
 Among the $2977$ knots, there are 
 $1580$ reversible, $47$ negative amphicheiral, $1$ positive amphicheiral,  
$1319$ chiral, and $30$ fully amphicheiral knots.  

For $s \in \mathscr{G}$ let $\mathscr{K}_s$ denote the set of prime oriented knots $K$ with at most 12 crossings 
such that $K \neq s(K)$. From the above we have
\begin{enumerate}
\setlength{\itemsep}{-3pt}
\item {$|\mathscr{K}_m|  = 1319 + 47 + 1580 = 2946$},
\item {$|\mathscr{K}_r|  = 1319 + 1 + 47 =  1367$},
\item {$|\mathscr{K}_{rm}|  = 1319 + 1 + 1580 = 2900$}.
\end{enumerate}
We have been able to find 60 connected quandles which will distinguish via the $\Psi$ invariant the following. 
\begin{enumerate}
\setlength{\itemsep}{-3pt}
\item {$K$ from $m(K)$ for all $K$ in $\mathscr{K}_m$},
\item {$K$ from $rm(K)$ for all $K$ in $\mathscr{K}_{rm}$ except  $12a_{0427}$,  the single positive amphicheiral prime knot with at most 12 crossings. Note that for this knot  $rm(K) = r(K)$}.
\item {$K$ from $r(K)$ for all  $K$ in $\mathscr{K}_r$, except the following 13 knots: 
$ 12a_{0059},12a_{0067},12a_{0292},12a_{0427},$ \break $12a_{0700},  12a_{0779}, 12a_{0926},12a_{0995},12a_{1012}, 12a_{1049},12a_{1055}, 12n_{0180},12n_{0761}$
all of which are chiral except for the positive amphicheiral knot $12a_{0427}$}.
\end{enumerate}

In all cases the quandles used are
either faithful and $\Psi$ reduces to ordinary coloring ${\rm Col}$ or else are abelian extensions whose projections are equivalent 
to the inner representation {\rm inn} and so the $\Psi$ invariant is the  2-cocycle invariant $\Phi$.

Quandle matrices for the 60 connected quandle used to distinguish knots $K$ from $s(K)$, $s \in \mathscr{G}$,  may be found
in the file  \url{http://shell.cas.usf.edu/~saito/QuandleColor/SixtyQuandles.txt}.   The list below gives some of their properties. The 26 quandles that distinguish the prime knots with at most 12 crossings modulo the group $\mathscr{G}$ may be found at \url {http://shell.cas.usf.edu/~saito/QuandleColor/}. Here we give a general description of the 60 quandles which were found mostly using GAP as discussed above. The orders of the 60 quandles  range from 18 to 1820.

35 of the 60 are of the form ${\rm GAlex}(G,f)$ where $G$ is a finite group.  Each of which is an abelian extension of ${\rm Image}({\rm inn})$ by an abelian group $\Lambda$. Below we omit the automorphism $f$.  These groups have order ranging from 24 to 660, the majority of which are finite simple non-abelian groups. 

23 of the 60 are conjugation quandles on groups ranging in order from 216 to 29120. We denote such a quandle by ${\rm Conj}(G)$ with the understanding that the underlying set of the quandle is a conjugacy class of $G$ and the quandle operation is conjugation. For conjugation quandles in the list, the largest group of order 29120 is the Suzuki group Sz(8) which provides a useful conjugation quandle of order 1820.  

There are two other quandles among the 60  that are neither conjugation quandles nor generalized Alexander quandles. One is 7 in the list:  the Rig quandle $Q(36,57)$, an abelian extension of $Q(12,10)$ by $\Z_3$ (see \cite{CSV}, proof of Theorem 7.1). The other is 22 in the list: an abelian extension of $Q(18,11)$ by $\Z_6$ which was also used in \cite{CSV}. 
\begin{multicols}{2}
\begin{enumerate}
\setlength{\itemsep}{-3pt}
\item{$18,{\rm Conj}(SmallGroup(216,90))$},
\item{$24,{\rm GAlex}(SL(2,3)),\Lambda = \Z_4$},
\item{$27,{\rm Conj}(SmallGroup(216,86))$},
\item{$27,{\rm Conj}(SmallGroup(486,41))$},
\item{$27,{\rm GAlex}(SmallGroup(27,3)),\Lambda = \Z_3$},
\item{$28,{\rm Conj}(SmallGroup(168,43))$},
\item{$36$, $Q(36,57) = \Z_3 \times_{\phi} Q(12,10)$},
\item{$40,{\rm Conj}(SmallGroup(320,1635))$},
\item{$60,{\rm GAlex}(A5),\Lambda = \Z_3$},
\item{$60,{\rm GAlex}(A5),\Lambda = \Z_3$},
\item{$60,{\rm GAlex}(A5),\Lambda = \Z_5$},
\item{$64,{\rm Conj}(SmallGroup(448,179))$},
\item{$64,{\rm Conj}(SmallGroup(768,1083508))$},
\item{$64,{\rm Conj}(SmallGroup(768,1083509))$},
\item{$72,{\rm GAlex}(Q8 : \Z_9),\Lambda = \Z_4$},
\item{$75,{\rm GAlex}(SmallGroup(75,2)),\Lambda = \Z_5$},
\item{$81,{\rm GAlex}(SmallGroup(81,10)),\Lambda = \Z_3$},
\item{$81,{\rm GAlex}(SmallGroup(81,12)),\Lambda = \Z_3$},
\item{$84,{\rm Conj}(SmallGroup(1512,779))$},
\item{$96,{\rm GAlex}(SmallGroup(96,203)), \\ \Lambda=\Z_4 \times \Z_2$}, %
\item{$108,{\rm GAlex}(SmallGroup(108,40)), \\ \Lambda = \Z_3$}, %
\item{$108$, $\Z_6 \times_{\phi} Q(18,11)$},
\item{$112,{\rm Conj}(SmallGroup(1344,816))$},
\item{$112,{\rm Conj}(SmallGroup(1344,816))$},
\item{$117,{\rm Conj}(SmallGroup(1053,51))$},
\item{$120,{\rm GAlex}(SL(2,5)),\Lambda = \Z_{10}$},
\item{$120,{\rm GAlex}(SL(2,5)),\Lambda = \Z_4$},
\item{$120,{\rm GAlex}(SL(2,5)),\Lambda = \Z_6$},
\item{$120,{\rm GAlex}(SL(2,5)),\Lambda = \Z_6$},
\item{$125,{\rm Conj}(SmallGroup(1500,36))$},
\item{$135,{\rm Conj}(SmallGroup(1620,421))$},
\item{$135,{\rm Conj}(SmallGroup(1620,421))$},
\item{$160,{\rm GAlex}(SmallGroup(160,199)), \\ \Lambda = \Z_4$}, %
\item{$162,{\rm Conj}(SmallGroup(1296,2890))$},
\item{$162,{\rm Conj}(SmallGroup(1296,2891))$},
\item{$168,{\rm Conj}(SmallGroup(1512,779))$},
\item{$168,{\rm GAlex}(PSL(3,2)),\Lambda = \Z_4$},
\item{$168,{\rm GAlex}(PSL(3,2)),\Lambda = \Z_4$},
\item{$168,{\rm GAlex}(PSL(3,2)),\Lambda = \Z_4$},
\item{$168,{\rm GAlex}(PSL(3,2)),\Lambda = \Z_3$},
\item{$168,{\rm GAlex}(PSL(3,2)),\Lambda = \Z_3$},
\item{$168,{\rm GAlex}(PSL(3,2)),\Lambda = \Z_7$},
\item{$192,{\rm Conj}(SmallGroup(1344,814))$},
\item{$243,{\rm GAlex}(SmallGroup(243,3)),\Lambda = \Z_3$},
\item{$336,{\rm GAlex}(SL(2,7)),\Lambda = \Z_8$},
\item{$351,{\rm Conj}((((\Z_3 \times \Z_3 \times \Z_3) : \Z_{13}) : \Z_3) : \Z_2)$},
\item{$360,{\rm GAlex}(A6),\Lambda = \Z_4$},
\item{$360,{\rm GAlex}(A6),\Lambda = \Z_5$},
\item{$504,{\rm GAlex}(PSL(2,8)),\Lambda = \Z_2$},
\item{$504,{\rm GAlex}(PSL(2,8)),\Lambda = \Z_3$},
\item{$504,{\rm GAlex}(PSL(2,8)),\Lambda = \Z_7$},
\item{$504,{\rm GAlex}(PSL(2,8)),\Lambda = \Z_9$},
\item{$504,{\rm GAlex}(PSL(2,8)),\Lambda = \Z_3$},
\item{$504,{\rm GAlex}(PSL(2,8)),\Lambda = \Z_3$},
\item{$660,{\rm GAlex}(PSL(2,11)),\Lambda = \Z_6$},
\item{$660,{\rm GAlex}(PSL(2,11)),\Lambda = \Z_6$},
\item{$660,{\rm GAlex}(PSL(2,11)),\Lambda = \Z_5$},
\item{$720,{\rm Conj}(M11)$},
\item{$990,{\rm Conj}(M11)$},
\item{$1820,{\rm Conj}(Sz(8))$}.
\end{enumerate}
\end{multicols}

In the file  \url{http://shell.cas.usf.edu/~saito/QuandleColor/PsiValues.txt}  we give a list of the quandles, knots, and the $\Psi$ values for all of the knots $K$ mentioned above for which the invariant distinguishes $K$ from $s(K)$ for some $s \in \{m,r,rm\}$.

In the file we indicate the 2977 non-trivial prime knots by ${\rm Knot}[i], i = 1, \dots, 2977$. The knot index $i$ is one less than the {\it name rank} of the knot given at \cite{KI}.  The quandle index $i$ represents the $i$\/th quandle in the file \url{SixtyQuandles.txt} mentioned above.

The file \url{PsiValues.txt} contains lists of the form
$$[[quandle \  index, knot \ index], \Psi^e_Q(K), \Psi^e_Q(m(K)),\Psi^e_Q(r(K)),\Psi^e_Q(rm(K))] . $$
For example the first list in the file is
$$[ [ 2, 1 ], [ 1, 0, 0, 4 ], [ 1, 0, 4, 0 ], [ 1, 0, 0, 4 ], [ 1, 0, 4, 0 ] . $$
This means that the 2nd quandle $Q_2$ in the file \url{SixtyQuandles.txt} distinguishes ${\rm Knot}[1] = 3_1$ from its mirror image
since $\Psi^e_{Q_2}(Knot[1]) =  [ 1, 0, 0, 4 ]$ and  $\Psi^e_{Q_2}(m(Knot[1])) =  [ 1, 0, 4, 0 ]$. Note that $Q_2$ is the Rig quandle $Q(24,3)$. 

Our {\it knot index}  $i$ is one less than the {\it name rank} of the knot given at \cite{KI}. Thus ${\rm Knot}[1]$ is the trefoil, whereas at \cite{KI} the trefoil has name rank 2. Braid representations of these knots (taken from \cite{KI}) can be found at
\url{http://shell.cas.usf.edu/~saito/QuandleColor/12965knotsGAP.txt}.

\section*{APPENDICES}
\appendix

Much of the development presented in these appendices is essentially due to Eisermann ( \cite{Eis1}, \cite{Eis2}, \cite{Eis3} ), 
however our emphasis is slightly different since we are particularly interested in constructing
large numbers of such quandles and determining when an extension ${\rm  inn } : Q \rightarrow {\rm inn}(Q)$  is or is not
equivalent to an abelian extension in order to test Conjecture~\ref{conjecture1}.
See also Remark~\ref{construction}.
We also need a simple criterion for isomorphism of generalized Alexander quandles, namely, Lemma~B.1 due to Hou.

\section{Abelian and Non-Abelian Extensions}

In this section we generalize the usual notion of abelian extension $\Lambda \times_\phi Q$ of a quandle $Q$ by an abelian group $\Lambda$  to an arbitrary group $\Lambda$. We note that such a generalization also appeared in  \cite{CEGS}. In the next section we show how to construct many such extensions (abelian and non-abelian) using generalize Alexander quandles, without finding the cocycles that determine the extensions. 

Let $Q$ be a quandle and $\Lambda$  be a group. Suppose there is a function $\phi: Q \times Q \rightarrow \Lambda$ such that the set $\Lambda \times Q$ becomes a quandle under the product $(\lambda,a)*(\mu,b) = (\lambda \phi(a,b),a*b)$.  Such a function $\phi$ will be called a cocycle even if $\Lambda$ is not abelian.
We denote this quandle by $\Lambda \times_{\phi} Q$.  The following lemma is straightforward.
 \begin{lemma} \label{cocycle} $\Lambda \times_{\phi} Q$ is a quandle if and only if the following conditions hold:

{\rm (i)} For all $a \in Q$,  $\phi(a,a) = 1.$

{\rm (ii)}  For all $a,b,c \in Q$, 
$\phi(a,b)\phi(a*b,c) = \phi(a,c)\phi(a*c,b*c).$
\end{lemma}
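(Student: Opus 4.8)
The plan is to check the three quandle axioms---idempotency, right invertibility, and right self-distributivity---directly for the operation $(\lambda,a)*(\mu,b) = (\lambda\phi(a,b), a*b)$ on $\Lambda\times Q$, and to show that each axiom reduces to a condition on $\phi$. Throughout I would use that $Q$ is already a quandle (so its operation satisfies all three axioms) and that $\Lambda$ is a group (so elements are invertible and the free first coordinate $\lambda$ can be cancelled).

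First I would handle idempotency: since $(\lambda,a)*(\lambda,a) = (\lambda\phi(a,a), a*a) = (\lambda\phi(a,a),a)$ using idempotency in $Q$, this equals $(\lambda,a)$ if and only if $\lambda\phi(a,a)=\lambda$, i.e., $\phi(a,a)=1$, which is condition (i). Next I would observe that right invertibility holds automatically and imposes no constraint on $\phi$: given $(\mu,b)$ and $(\nu,c)$, the equation $(\lambda,a)*(\mu,b)=(\nu,c)$ forces $a*b=c$, solved uniquely by right invertibility in $Q$, and then $\lambda = \nu\phi(a,b)^{-1}$, which exists and is unique precisely because $\Lambda$ is a group. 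Finally I would expand both sides of self-distributivity: the left side has $\Lambda$-component $\lambda\phi(a,b)\phi(a*b,c)$ and $Q$-component $(a*b)*c$, while the right side has $\Lambda$-component $\lambda\phi(a,c)\phi(a*c,b*c)$ and $Q$-component $(a*c)*(b*c)$. The $Q$-components agree by self-distributivity in $Q$, so equality for every $\lambda$ is equivalent, after cancelling $\lambda$, to condition (ii); conversely (ii) makes the identity hold for all $\lambda$. Combining these three cases gives both directions of the lemma.

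There is no genuine obstacle here; the argument is a direct verification. The only points requiring a little care are the bookkeeping of which axiom yields which condition---in particular noting that right invertibility is automatic and therefore absent from (i)--(ii)---and the systematic use of the group structure of $\Lambda$ both to cancel the arbitrary first coordinate $\lambda$ and to form the inverse $\phi(a,b)^{-1}$.
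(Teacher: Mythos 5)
Your verification is correct and is exactly the direct axiom-by-axiom check that the paper has in mind when it declares the lemma ``straightforward'' and omits the proof. The bookkeeping is right: (i) comes from idempotency, (ii) from right self-distributivity, and right invertibility is indeed automatic from invertibility in $Q$ together with the group structure of $\Lambda$.
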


We recall from Eisermann \cite{Eis3} that an epimorphism $p: \tilde{Q} \rightarrow Q$ of quandles is called a {\it covering} if $p(y_1) = p(y_2)$ implies that $R_{y_1} = R_{y_2}$ and the {\it group of deck transformations} ${\rm Aut}(p)$ is the subgroup of ${\rm Aut}(\tilde{Q})$ consisting of those automorphisms $\lambda$ of  $\tilde{Q}$ satisfying $p\lambda= p$. Equivalently, $\lambda \in  {\rm Aut}(p)$ if $\lambda \in  {\rm Aut}(\tilde{Q})$ and for all $x \in Q$ and $y \in p^{-1}(x)$ we have $\lambda(y) \in p^{-1}(x)$, that is, if $F$ is a fiber of $p$  then $\lambda(F) \subset F$.

\begin{lemma} \label{coverings} Let  $p: \tilde{Q} \rightarrow Q$ be a covering.

{\rm (i)} For $\phi \in {\rm Aut}(p)$ and $\beta \in {\rm Inn}(\tilde{Q})$ we have 
$\phi \beta = \beta \phi.$

{\rm (ii)} If $\tilde{Q}$ is connected then ${\rm Aut}(p)$ acts freely on $\tilde{Q}$, that is, if $\phi \in {\rm Aut}(p)$ and for some $y_0 \in  \tilde{Q}$, $\phi(y_0) = y_0$ then $\phi$ acts as the identity on $\tilde{Q}$.

{\rm (iii)}  If $\tilde{Q}$ is connected,  $x \in Q$  and  $y_0 \in p^{-1}(x)$ the mapping $\tau: {\rm Aut}(p) \rightarrow p^{-1}(x)$ given by $\tau(\phi) = \phi(y_0)$ is a injection. Hence  $|{\rm Aut}(p)| \le | p^{-1}(x)|.$
\end{lemma}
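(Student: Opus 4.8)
The plan is to prove the three parts in order, since each relies on the previous. The foundation is part (i), which I would establish first and which drives everything else.

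For part (i), the key observation is that any quandle automorphism $\phi$ satisfies $\phi R_a = R_{\phi(a)} \phi$ for every $a \in \tilde{Q}$; this is just a restatement of $\phi(x * a) = \phi(x) * \phi(a)$. The work is to show that $R_{\phi(a)} = R_a$. Since $\phi \in {\rm Aut}(p)$ we have $p \phi = p$, so $p(\phi(a)) = p(a)$, meaning $\phi(a)$ lies in the same fiber as $a$. The defining property of a covering, namely that $p(y_1) = p(y_2)$ implies $R_{y_1} = R_{y_2}$, then yields $R_{\phi(a)} = R_a$. Combining, $\phi R_a = R_a \phi$, so $\phi$ commutes with each generator $R_a$ of ${\rm Inn}(\tilde{Q})$, and hence with every $\beta \in {\rm Inn}(\tilde{Q})$.

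For part (ii), suppose $\phi \in {\rm Aut}(p)$ fixes some $y_0 \in \tilde{Q}$. Given any $y \in \tilde{Q}$, connectedness of $\tilde{Q}$ furnishes a $\beta \in {\rm Inn}(\tilde{Q})$ with $\beta(y_0) = y$. Using the commutation relation from part (i),
$$\phi(y) = \phi(\beta(y_0)) = \beta(\phi(y_0)) = \beta(y_0) = y,$$
so $\phi$ is the identity on $\tilde{Q}$. This is exactly the statement that the action of ${\rm Aut}(p)$ is free.

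For part (iii), I would first note that $\tau$ is well-defined with image in $p^{-1}(x)$, since $p(\phi(y_0)) = p(y_0) = x$ for $\phi \in {\rm Aut}(p)$. For injectivity, suppose $\phi_1(y_0) = \phi_2(y_0)$. Then $\phi_2^{-1} \phi_1 \in {\rm Aut}(p)$ fixes $y_0$, so by part (ii) it is the identity, giving $\phi_1 = \phi_2$. An injection from ${\rm Aut}(p)$ into the finite set $p^{-1}(x)$ immediately yields $|{\rm Aut}(p)| \le |p^{-1}(x)|$. I expect the only subtle step to be part (i), where one must correctly invoke the covering hypothesis to collapse $R_{\phi(a)}$ to $R_a$; the remaining parts are formal consequences of part (i) together with the transitivity of ${\rm Inn}(\tilde{Q})$ on a connected quandle.
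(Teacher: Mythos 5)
Your proof is correct and follows essentially the same route as the paper's: part (i) via the covering property forcing $R_{\phi(a)}=R_a$ so that $\phi$ commutes with the generators of ${\rm Inn}(\tilde{Q})$, part (ii) by transporting the fixed point around via connectedness, and part (iii) as a formal consequence (which the paper leaves implicit and you spell out). No gaps.
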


\begin{proof} ( \cite{Eis3}  )
 {\rm (i)} Since $p$ is a covering and $p(\phi(y)) = p(y)$,  $z*\phi(y) = z*y$  for all $z$. Hence
$\phi R_y(x) = \phi(x*y) = \phi(x)*\phi(y) = \phi(x)*y = R_y \phi(x)$. So $\phi R_y = R_y \phi$ for all $y$. Since the $R_y$ generate ${\rm Inn}(\tilde{Q})$ {\rm (i)} holds.
{\rm (ii)} If $\tilde{Q}$ is connected,  for each $y \in \tilde{Q}$ there is a $\beta \in {\rm Inn}(\tilde{Q})$ such that $y = \beta(y_0)$. Thus 
       $$\phi(y) = \phi(\beta(y_0)) = \beta(\phi(y_0)) = \beta(y_0) = y.$$
{\rm (iii)} is immediate from {\rm (ii)}.
\end{proof}

We note that extensions $p:\tilde{Q} \rightarrow Q$ described in the following theorem  are called principal $\Lambda$-coverings  and are written $\Lambda  \curvearrowright \tilde{Q} \rightarrow Q$  in Eisermann \cite{Eis3}. The proof below uses the method of the proofs of Lemma 40 and Theorem 44 of \cite{Eis1}. One easily sees that the assumption that $\Lambda$ is abelian is not necessary.

\begin{theorem} \label{ExtCriterion} Let $p:\tilde{Q} \rightarrow Q$ be a covering. Let $\Lambda$ be a subgroup of 
the deck transformation group ${\rm Aut}(p)$ that acts freely and transitively on each fiber $p^{-1}(q)$, $q \in Q.$ Then $\tilde{Q}$ is isomorphic to $\Lambda \times_{\phi} Q$ for some cocycle $\phi$.
\end{theorem}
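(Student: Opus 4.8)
The plan is to construct an explicit isomorphism by choosing a set-theoretic section of $p$ and transporting the quandle structure of $\tilde{Q}$ onto $\Lambda \times Q$. First I would fix a section $s \from Q \rightarrow \tilde{Q}$ with $p \comp s = \id$, i.e. a choice of base point in each fiber. Because $\Lambda$ acts freely and transitively on each fiber $p^{-1}(q)$, every $y \in \tilde{Q}$ is uniquely of the form $y = \lambda(s(q))$ with $q = p(y)$ and $\lambda \in \Lambda$. This produces a bijection $\Theta \from \Lambda \times Q \rightarrow \tilde{Q}$, $\Theta(\lambda,a) = \lambda(s(a))$, which I will upgrade to a quandle isomorphism.

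Next I would define the candidate cocycle. For $a,b \in Q$ note that $p(s(a) * s(b)) = p(s(a)) * p(s(b)) = a*b$, so $s(a)*s(b)$ lies in the fiber $p^{-1}(a*b)$, which also contains $s(a*b)$. Transitivity of the $\Lambda$-action on that fiber yields an element carrying $s(a*b)$ to $s(a)*s(b)$, and freeness makes it unique; I define $\phi(a,b) \in \Lambda$ by
$$s(a)*s(b) = \phi(a,b)\bigl(s(a*b)\bigr).$$
This is precisely where it matters that $\Lambda$ is a \emph{free and transitive} group of deck transformations: transitivity and freeness together are what make $\phi(a,b)$ a well-defined \emph{element of $\Lambda$} rather than merely a permutation of the fiber.

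Then I would compute $\Theta(\lambda,a) * \Theta(\mu,b)$ and show it equals $\Theta(\lambda\phi(a,b),\, a*b)$. Since $\mu$ is a deck transformation, $p(\mu(s(b))) = b$, and because $p$ is a covering the right translations agree, $R_{\mu(s(b))} = R_{s(b)}$; hence $\Theta(\lambda,a)*\Theta(\mu,b) = R_{s(b)}(\lambda(s(a)))$. Now Lemma~\ref{coverings}(i) gives that every element of $\Lambda \subset {\rm Aut}(p)$ commutes with every element of ${\rm Inn}(\tilde{Q})$, in particular $\lambda R_{s(b)} = R_{s(b)} \lambda$, so
$$R_{s(b)}(\lambda(s(a))) = \lambda\bigl(s(a)*s(b)\bigr) = \lambda\bigl(\phi(a,b)(s(a*b))\bigr) = (\lambda\phi(a,b))\bigl(s(a*b)\bigr),$$
where $\lambda$ and $\phi(a,b)$ compose inside $\Lambda$. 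Under $\Theta$ this reads $(\lambda,a)*(\mu,b) = (\lambda\phi(a,b),\, a*b)$, exactly the operation defining $\Lambda \times_\phi Q$.

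Finally, since $\Theta$ is a bijection intertwining the quandle operation of $\tilde{Q}$ with this product, the set $\Lambda \times Q$ equipped with the product above is a quandle isomorphic to $\tilde{Q}$, and Lemma~\ref{cocycle} then guarantees that $\phi$ satisfies conditions (i) and (ii), i.e. that $\phi$ is a cocycle, which completes the argument. The one genuinely delicate point is the product computation: it hinges on combining the covering identity $R_{\mu(s(b))} = R_{s(b)}$ (which discards the $\mu$-coordinate of the second factor, matching the fact that the product in $\Lambda \times_\phi Q$ is independent of $\mu$) with the commutation relation of Lemma~\ref{coverings}(i). Everything else is bookkeeping on the chosen section.
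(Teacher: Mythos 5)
Your proposal is correct and follows essentially the same route as the paper's proof: choose a section $s$, use free transitivity of $\Lambda$ on fibers to define $\phi(a,b)$ by $s(a)*s(b)=\phi(a,b)(s(a*b))$ and to get the bijection $(\lambda,a)\mapsto\lambda(s(a))$, then verify the homomorphism property using the covering identity $a*\lambda(b)=a*b$ together with the commutation of deck transformations with inner automorphisms, and finally deduce the cocycle condition from the transported quandle structure. The only difference is cosmetic — you compute the product in the forward direction while the paper verifies $f((\lambda,a)*(\mu,b))=f(\lambda,a)*f(\mu,b)$ — so nothing further is needed.
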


\begin{proof} We note that for $\lambda \in \Lambda$ and $a,b$ in $\tilde{Q}$  
since $p$ is a covering  we have $p(\lambda(b)) = p(b)$ and hence
$$a * \lambda(b) = a*b \quad {\rm  for \ all } \ a,b \in \Lambda.$$ 
Also we have by Lemma~\ref{coverings} (i) that $R_b(\lambda(a)) = \lambda(R_b(a))$,
hence
$$\lambda(a)*b = \lambda(a*b) \quad  {\rm  for\  all }\  a,b \in \Lambda.$$

Let $s:Q \rightarrow \tilde{Q}$ be a section of $p$, that is $p(s(q)) = q$ or $s(q) \in p^{-1}(q)$ for all $q \in Q$.
Then since $\Lambda$ acts freely and transitively on $p^{-1}(q)$ we have that $\lambda \mapsto \lambda s(q)$ is a bijection from $\Lambda$ to the fiber $p^{-1}(q).$ In particular we have 
$$p^{-1}(q) = \Lambda s(q) \quad {\rm  for \ all } \ q \in Q.$$
Now since $p(s(x)*s(x)) = p(s(x))*p(s(y)) = x*y$ for $x,y \in \tilde{Q}$ there exists unique  $\phi(x,y) \in \Lambda$ such that
$$s(x)*s(y) = \phi(x,y)s(x*y) \quad {\rm  for \ all } \ x, y \in Q.$$

Now we claim that $\phi: Q \times Q \rightarrow \Lambda$ is a cocycle and that the mapping
 $f: \Lambda \times_{\phi} Q \rightarrow \tilde{Q}$ defined by $f(\lambda, a) = \lambda ( s(a))$
is an isomorphism of quandles. $f$ is clearly a bijection since each element of $\tilde{Q}$ can be uniquely represented in the form $\lambda s(a)$ for $\lambda \in \Lambda$ and $a \in Q.$
So it suffices to show that $f$ is a homomorphism. We compute:
\begin{eqnarray*}
& & f((\lambda,a)*(\mu,b)) = 
f(\lambda \phi(a,b), a*b) = 
\lambda( \phi(a,b) (s(a*b)) \\ & & 
 = 
\lambda (s(a)*s(b))
=
\lambda(s(a))*s(b) =
\lambda(s(a))*\mu(s(b)) =
f((\lambda,a))*f((\mu,b)).
\end{eqnarray*}
Note that this shows that $\Lambda \times_{\phi} Q$ is isomorphic as a magma to the quandle $\tilde{Q}$ and so is indeed a quandle and hence $\phi$ does satisfy the cocyle condition in Lemma~\ref{cocycle}.
\end{proof}

\section{Generalized Alexander Quandles}\label{sec:galex}

We remind the reader that all quandles and groups in this paper are finite.

In this section we show how we justify our construction of most of the quandles used in this paper.
This method allows us to easily construct using GAP many  connected abelian
and non-abelian extensions $\Lambda \times_\phi Q$ without finding the cocycles $\phi$.

Given a group $G$ and an automorphism $f$ of $G$ let ${\rm GAlex}(G,f)$ be the quandle
with underlying set  $G$ and product $a*b = f(ab^{-1})b$ for $a,b \in G$.
It is well-known that this is a quandle. We call this a generalized Alexander quandle. 

In the case where $G$ is abelian ${\rm GAlex}(G,f)$ is called an Alexander quandle and has been much studied. Note that in general ${\rm GAlex}(G,f)$ need not be connected.  We note that in \cite{Eis3} Eisermann writes ${\rm Alex}(G,f)$ for our ${\rm GAlex}(G,f)$ and, breaking with tradition, calls this an Alexander quandle.  The following is useful for constructing generalized Alexander quandles up to isomorphism.

\begin{lemma} \label{Hou} (Hou \cite{Hou}) If  ${\rm GAlex}(G,f)$ and ${\rm GAlex}(G,g)$ are connected then they are isomorphic if and only if
$f$ is conjugate to $g$ in ${\rm Aut}(G)$. Also for connected quandles if  ${\rm GAlex}(G_1,f_1) \cong {\rm GAlex}(G_2,f_2)$ (as quandles) then $G_1 \cong G_2$ (as groups). 
\end{lemma}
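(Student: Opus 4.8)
The plan is to treat the two directions of the first biconditional separately and then read off the second statement as a by-product of the structural analysis. I work throughout with the fact that $\mathrm{GAlex}(G,f)$ and $\mathrm{GAlex}(G,g)$ have the \emph{same} underlying set $G$, so that a quandle isomorphism between them is a bijection $\psi\colon G\to G$.

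\emph{Easy direction.} Suppose $g=\alpha f\alpha^{-1}$ for some $\alpha\in{\rm Aut}(G)$. I claim $\alpha$ itself, viewed as a map $\mathrm{GAlex}(G,f)\to\mathrm{GAlex}(G,g)$, is a quandle isomorphism. This is a one-line check: writing $*_f$ and $*_g$ for the two products, $\alpha(a*_f b)=\alpha\big(f(ab^{-1})b\big)=\alpha f(ab^{-1})\,\alpha(b)$, while $\alpha(a)*_g\alpha(b)=g\big(\alpha(ab^{-1})\big)\alpha(b)=\alpha f\alpha^{-1}\alpha(ab^{-1})\,\alpha(b)$, and these agree. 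No connectedness is needed here.

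\emph{Key structural computation.} A direct calculation gives $R_a(x)=x*_f a=f(x)\,c_a$ with $c_a:=f(a)^{-1}a$; that is, $R_a=\rho_{c_a}\circ f$, where $\rho_c$ denotes right multiplication $x\mapsto xc$ in the group $G$. Consequently $R_aR_b^{-1}=\rho_{c_b^{-1}c_a}$, so every element of the displacement group ${\rm Dis}(Q):=\langle R_aR_b^{-1}\rangle$ is a right multiplication, and since $c_1=1$ one gets ${\rm Dis}(Q)=\{\rho_w : w\in N\}$ where $N=\langle c_a : a\in G\rangle$. The $\mathrm{Inn}$-orbit of $1$ is exactly $N$, so \emph{$Q$ is connected iff $N=G$}, in which case ${\rm Dis}(Q)=\{\rho_w:w\in G\}=:R(G)$, the right regular representation, which is canonically isomorphic to $G$. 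Two further observations are central: first, $R_1(x)=x*_f1=f(x)$, so $R_1=f$ as a permutation of $G$; second, conjugation by $R_1$ acts on $R(G)$ by $R_1\rho_wR_1^{-1}=\rho_{f(w)}$, i.e.\ under $R(G)\cong G$ this conjugation realizes the automorphism $f$.

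\emph{Hard direction and the group recovery.} Let $\psi\colon\mathrm{GAlex}(G,f)\to\mathrm{GAlex}(G,g)$ be an isomorphism of connected quandles. Composing $\psi$ with a suitable inner automorphism of the target (a quandle automorphism, available by connectedness) I may normalize $\psi(1)=1$. Because $\psi R_a\psi^{-1}=R_{\psi(a)}$ for all $a$, conjugation by $\psi$ carries ${\rm Dis}(\mathrm{GAlex}(G,f))$ onto ${\rm Dis}(\mathrm{GAlex}(G,g))$; by connectedness both equal $R(G)$, so $\psi_*\colon\rho_u\mapsto\psi\rho_u\psi^{-1}$ is an automorphism of $R(G)$ and defines $\alpha\in{\rm Aut}(G)$ by $\psi\rho_u\psi^{-1}=\rho_{\alpha(u)}$. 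Since $R_1^{(f)}=f$, $R_1^{(g)}=g$, and $\psi(1)=1$, the relation $\psi R_1^{(f)}=R_1^{(g)}\psi$ gives $g=\psi f\psi^{-1}$ as permutations. Now conjugate $\rho_w$ by this: using $\psi^{-1}\rho_w\psi=\rho_{\alpha^{-1}(w)}$ and the conjugation formula above for $f$,
\[
g\rho_w g^{-1}=\psi f\,\rho_{\alpha^{-1}(w)}\,f^{-1}\psi^{-1}=\psi\rho_{f\alpha^{-1}(w)}\psi^{-1}=\rho_{\alpha f\alpha^{-1}(w)},
\]
while the same conjugation formula for $g$ gives $g\rho_wg^{-1}=\rho_{g(w)}$. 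Comparing and using injectivity of $w\mapsto\rho_w$ yields $g=\alpha f\alpha^{-1}$, proving conjugacy in ${\rm Aut}(G)$. The second assertion is immediate from the structural computation: a quandle isomorphism induces an isomorphism of displacement groups, and for connected $\mathrm{GAlex}(G_i,f_i)$ we have ${\rm Dis}\cong G_i$, so $G_1\cong G_2$.

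\emph{Main obstacle.} The routine parts are the two explicit computations with $\rho$ and $f$; the genuinely load-bearing step is recognizing that connectedness forces ${\rm Dis}(Q)=R(G)$ so that $\psi_*$ restricts to an \emph{automorphism of all of $G$} (rather than of a proper subgroup), together with the clean fact that $f$ is recovered as conjugation by the base-point translation $R_1$. I expect the care needed in the base-point normalization $\psi(1)=1$, and in tracking that $\psi$ itself is not a group map whereas its conjugation action on $R(G)$ is the group automorphism $\alpha$, to be the delicate bookkeeping of the argument.
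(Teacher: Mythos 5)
The paper does not actually prove this lemma: it is stated as a citation to ``Hou, X.-D, Personal communication,'' so there is no in-paper argument to compare yours against. Your proposal supplies a complete and, as far as I can check, correct proof. The easy direction is the right one-line computation. The structural core is also sound: $R_a=\rho_{c_a}\circ f$ with $c_a=f(a)^{-1}a$, hence $R_aR_b^{-1}=\rho_{c_b^{-1}c_a}$, so the displacement group is the right-regular copy of $N=\langle c_a\rangle$, the orbit of $1$ is $N$, and connectedness is equivalent to $N=G$; the identities $R_1=f$ and $R_1\rho_wR_1^{-1}=\rho_{f(w)}$ all check out. In the hard direction, the normalization $\psi(1)=1$, the fact that $\psi\,\mathrm{Dis}\,\psi^{-1}=\mathrm{Dis}$ forces an automorphism $\alpha$ of $G$ (note the anti-isomorphism $w\mapsto\rho_w$ causes the two reversals to cancel, so $\alpha$ is genuinely multiplicative --- worth a sentence in a final write-up), and the comparison $\rho_{g(w)}=g\rho_wg^{-1}=\rho_{\alpha f\alpha^{-1}(w)}$ yield $g=\alpha f\alpha^{-1}$. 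One small remark you could add: with $\psi(1)=1$ the identity $\psi\rho_u\psi^{-1}=\rho_{\alpha(u)}$ evaluated at $1$ gives $\psi=\alpha$, so the normalized quandle isomorphism is itself a group automorphism, which makes the conclusion transparent. The second assertion follows exactly as you say, since a quandle isomorphism conjugates displacement groups onto one another and $\mathrm{Dis}(\mathrm{GAlex}(G_i,f_i))\cong G_i$ under connectedness. This is a genuine service to the paper, which leaves the statement unproved.
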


For a group $G$ and $f \in {\rm Aut}(G)$ we let ${\rm Fix}(G,f) = \{x \in G: f(x) = x \}$.
Using the notation of \cite{HSV} if  $H$ is a subgroup of ${\rm Fix}(G,f)$, then ${\mathscr H}(G,H,f) $ is  the quandle  with underlying set $\{Hg: g \in G\}$ with product
given by $Ha*Hb = Hf(ab^{-1})b$. It is well-known that this is a quandle and that every
connected quandle can be so represented. In the terminology of \cite{HSV} a quandle that is isomorphic to some ${\mathscr H}(G,H,f)$ is said to admit a  homogeneous  representation.  Clearly ${\mathscr H}(G,\{1\},f) \cong {\rm GAlex}(G,f)$. The following result
is useful for determining when a connected quandle is a generalized Alexander quandle.  We  denote the derived subgroup of $G$ by $G'$. 

\begin{lemma} (\cite{HSV} ) \label{HSV}If $Q$ is a finite connected quandle, $G = {\rm Inn}(Q)'$,
$e \in Q$,  $\phi(g) =R_e^{-1}gR_e$, for $g \in G$, then the mapping
$$\pi_e:{\mathscr H}(G,{\rm Stab}_G(e), \phi) \rightarrow Q$$
given by $\pi_e({\rm Stab}_G(e)g) = eg$ is a quandle isomorphism.  Moreover if $G_1$ is of smallest order among all groups such that $Q \cong {\mathscr H}(G_1,H_1,f_1)$, then $G_1 \cong {\rm Inn}(Q)'$.
\end{lemma}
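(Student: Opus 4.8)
The plan is to verify directly that $\pi_e$ is a well-defined quandle isomorphism, and then to deduce the minimality statement by comparing an arbitrary homogeneous representation of $Q$ with the canonical group ${\rm Inn}(Q)'$. First I would check that the construction even makes sense, i.e.\ that $H := {\rm Stab}_G(e)$ lies in ${\rm Fix}(G,\phi)$. Since $\phi$ is conjugation by $R_e$, we have ${\rm Fix}(G,\phi) = C_G(R_e)$; and if $g\in G$ fixes $e$, then for every $x\in Q$, using that $g$ is an automorphism and $eg=e$, $x(R_eg)=(x*e)g=(xg)*(eg)=(xg)*e=x(gR_e)$, so $g$ commutes with $R_e$. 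Hence $H\subseteq C_G(R_e)={\rm Fix}(G,\phi)$ and ${\mathscr H}(G,H,\phi)$ is defined. Well-definedness and injectivity of $\pi_e$ are then immediate from the stabilizer property, since $Hg_1=Hg_2\iff g_1g_2^{-1}\in H\iff eg_1=eg_2$.

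The only substantial point for bijectivity is surjectivity, which is exactly the transitivity of $G={\rm Inn}(Q)'$ on $Q$. I would route this through the \emph{displacement group} $D:=\langle R_aR_b^{-1}: a,b\in Q\rangle$. One checks $D\trianglelefteq {\rm Inn}(Q)$ (because $g^{-1}(R_aR_b^{-1})g=R_{ag}R_{bg}^{-1}$), and that ${\rm Inn}(Q)/D$ is cyclic, generated by the common image of all the $R_a$; being abelian this gives ${\rm Inn}(Q)'\subseteq D$. For the reverse inclusion, connectedness supplies for each $a,b$ some $g$ with $ag=b$, whence $R_b=R_{ag}=g^{-1}R_ag$ and $R_aR_b^{-1}=R_ag^{-1}R_a^{-1}g$ is a commutator, so $D\subseteq {\rm Inn}(Q)'$; thus $D={\rm Inn}(Q)'$. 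That $D$ is transitive then follows because, being normal, its orbits form a block system: the $D$-orbit of $e$ is preserved by $R_e$ (which fixes $e$ and normalizes $D$), hence by ${\rm Inn}(Q)=D\langle R_e\rangle$, and transitivity of ${\rm Inn}(Q)$ forces this orbit to be all of $Q$. This yields $eG=Q$ and surjectivity. The homomorphism property is then a one-line computation: with $R_{eb}=b^{-1}R_eb$ and $eR_e^{-1}=e$ (idempotency), $\pi_e(Ha)*\pi_e(Hb)=(ea)R_{eb}=e(ab^{-1}R_eb)=e(R_e^{-1}ab^{-1}R_eb)=e(\phi(ab^{-1})b)=\pi_e(Ha*Hb)$.

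For the ``moreover'' part, given any representation $Q\cong{\mathscr H}(G_1,H_1,f_1)$ I would compute the right translations in this model. Writing $\rho_x\colon H_1g\mapsto H_1gx$ and $L_{f_1}\colon H_1g\mapsto H_1f_1(g)$, one finds $R_{H_1b}=\rho_{f_1(b)^{-1}b}\circ L_{f_1}$, so every $R_{H_1a}R_{H_1b}^{-1}$ equals $\rho_w$ with $w\in N:=\langle x^{-1}f_1(x): x\in G_1\rangle$, and conversely all generators $x^{-1}f_1(x)$ of $N$ arise this way. Hence $D={\rm Inn}(Q)'$ is the image $\rho(N)$, and since $\ker\rho={\rm core}_{G_1}(H_1)$ we obtain $|{\rm Inn}(Q)'|=|N/(N\cap{\rm core}_{G_1}(H_1))|\le|G_1|$. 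Because ${\rm Inn}(Q)'$ itself realizes $Q$ by the first part, it attains the minimal order; and when $|G_1|$ is minimal the inequalities collapse, forcing $N=G_1$ and ${\rm core}_{G_1}(H_1)=1$, so $\rho$ is injective and $G_1\cong\rho(G_1)=\rho(N)={\rm Inn}(Q)'$.

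The main obstacle is the identification $D={\rm Inn}(Q)'$ together with the transitivity of the displacement group: every downstream step (surjectivity of $\pi_e$ and the entire minimality count) rests on it. A secondary piece of care is the bookkeeping in the ``moreover'' part, namely tracking precisely which element of $G_1$ induces a given displacement and identifying $\ker\rho$ with the normal core ${\rm core}_{G_1}(H_1)$.
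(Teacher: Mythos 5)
Your proof is correct. Note that the paper itself offers no proof of this lemma --- it is quoted verbatim from \cite{HSV} --- so there is no in-paper argument to compare against; what you have written is essentially a self-contained reconstruction of the argument in that reference. The key points are all in order: the verification that ${\rm Stab}_G(e)\subseteq C_G(R_e)={\rm Fix}(G,\phi)$ so that ${\mathscr H}(G,{\rm Stab}_G(e),\phi)$ is even defined, the identification of ${\rm Inn}(Q)'$ with the displacement group $\langle R_aR_b^{-1}\rangle$ (using connectedness for the inclusion $D\subseteq{\rm Inn}(Q)'$ and the cyclic quotient for the reverse), the block-system argument for transitivity of $D$, and the coset bookkeeping with $N=\langle x^{-1}f_1(x)\rangle$ and $\ker\rho={\rm core}_{G_1}(H_1)$ for the minimality claim. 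These are exactly the standard ingredients, and each step checks out.
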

\begin{corollary} \label{minG}A connected quandle $Q$ is a generalized Alexander quandle ${\rm GAlex}(G,f)$ if and only if  one of the following holds:
 (i) $|Q| = |{\rm Inn}(Q)'|$, or
 (ii) ${\rm Stab}_{{\rm Inn}(Q)'}(e) = \{1\}$, for any $e \in Q$.
(If either (i) or (ii) holds then both hold.)  
Furthermore in this case we may take $G = {\rm Inn}(Q)'$ and for any $e \in Q$,  $f(g) = R_e^{-1}gR_e$, for $g \in G$.
\end{corollary}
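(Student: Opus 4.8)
The plan is to derive this corollary purely as a cardinality bookkeeping consequence of Lemma~\ref{HSV}, since all the structural work has already been done there. The single identity that drives everything comes from writing $G = {\rm Inn}(Q)'$ and invoking the isomorphism $Q \cong {\mathscr H}(G, {\rm Stab}_G(e), \phi)$ supplied by Lemma~\ref{HSV}; counting cosets gives
$$|Q| = [\, G : {\rm Stab}_G(e)\,] = \frac{|{\rm Inn}(Q)'|}{|{\rm Stab}_{{\rm Inn}(Q)'}(e)|}.$$
From this one equation the equivalence of (i) and (ii) is immediate: $|Q| = |{\rm Inn}(Q)'|$ holds if and only if $|{\rm Stab}_{{\rm Inn}(Q)'}(e)| = 1$, that is, ${\rm Stab}_{{\rm Inn}(Q)'}(e) = \{1\}$. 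The same identity shows the order of the stabilizer equals $|{\rm Inn}(Q)'|/|Q|$, which is independent of $e$, so the ``for any $e$'' clause in (ii) causes no difficulty.

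For the forward direction I would assume $Q \cong {\rm GAlex}(G_0, f_0) = {\mathscr H}(G_0, \{1\}, f_0)$, so that $|Q| = |G_0|$. Since this is a homogeneous representation of $Q$ with group $G_0$, the minimality clause of Lemma~\ref{HSV}, which identifies the smallest such group as ${\rm Inn}(Q)'$, yields $|{\rm Inn}(Q)'| \le |G_0| = |Q|$. Combining this with the displayed identity, which gives $|{\rm Inn}(Q)'| \ge |Q|$, forces $|{\rm Inn}(Q)'| = |Q|$, namely (i), and hence (ii). For the reverse direction, if (ii) holds then ${\rm Stab}_G(e) = \{1\}$ collapses the isomorphism of Lemma~\ref{HSV} to $Q \cong {\mathscr H}(G, \{1\}, \phi) = {\rm GAlex}(G, \phi)$ with $G = {\rm Inn}(Q)'$ and $\phi(g) = R_e^{-1} g R_e$; this simultaneously proves that $Q$ is a generalized Alexander quandle and establishes the final ``we may take $G = {\rm Inn}(Q)'$'' assertion. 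Because (i) $\Leftrightarrow$ (ii), starting instead from (i) proceeds identically.

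The argument is essentially routine once Lemma~\ref{HSV} is in hand, so I expect no serious obstacle; the one point requiring genuine care is the inequality $|{\rm Inn}(Q)'| \le |Q|$ in the forward direction. This is exactly where the minimality statement of Lemma~\ref{HSV}, equivalently Hou's uniqueness Lemma~\ref{Hou}, is indispensable: without it one could not exclude a generalized Alexander representation ${\mathscr H}(G_0,\{1\},f_0)$ whose group $G_0$ is strictly larger than ${\rm Inn}(Q)'$, and the cardinality comparison would break down.
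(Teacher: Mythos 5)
Your argument is correct and is essentially the deduction the paper intends: the corollary is stated there without proof as an immediate consequence of Lemma~\ref{HSV}, and your coset count $|Q| = [\,{\rm Inn}(Q)' : {\rm Stab}_{{\rm Inn}(Q)'}(e)\,]$ combined with the minimality clause is exactly the required bookkeeping. The only quibble is your aside that this minimality clause is ``equivalently'' Hou's Lemma~\ref{Hou}: Hou's result compares two generalized Alexander presentations and would not by itself exclude a representation over a group larger than ${\rm Inn}(Q)'$, so the minimality statement of Lemma~\ref{HSV} is the one doing the work --- as your main argument in fact uses.
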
 

Note that in the above corollary generally $R_e$ will not be in $G = {\rm Inn}(Q)'$ but $f$ is well-defined on $G$
since ${\rm Inn}(Q)' $ is a normal subgroup of ${\rm Inn}(Q)$.

\begin{corollary} Let  $p: \tilde{Q} \rightarrow Q$ be a covering such that
${\rm Aut}(p)$ acts transitively on each fiber.
Then if $y \in p^{-1}(x)$ for some $x \in Q$ and for some $\beta \in {\rm Inn}(Q)$  we have $\beta(y) = y$
then $beta$ acts as the  identity on the fiber.  Hence if $y_0, y_1, y_2 \in p^{-1}(x)$   and  $(y_0,y_1)$ and $(y_0, y_2)$ are in the same orbit of 
${\rm Inn}(\tilde{Q})$ acting on pairs, then $y_1 = y_2.$
\end{corollary}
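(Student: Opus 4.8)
The plan is to deduce both assertions from the commutation relation in Lemma~\ref{coverings}(i) together with the hypothesis that ${\rm Aut}(p)$ acts transitively on each fiber; the real content lies in the first assertion, and the second will follow formally once the first is in hand. Throughout I read $\beta$ as an element of ${\rm Inn}(\tilde{Q})$, since it acts on $\tilde{Q}$ and must move points of the fiber $p^{-1}(x) \subset \tilde{Q}$.

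First I would prove the claim that if some $\beta \in {\rm Inn}(\tilde{Q})$ fixes a single point $y$ of the fiber $p^{-1}(x)$, then $\beta$ fixes the entire fiber. Take an arbitrary $y' \in p^{-1}(x)$. Because ${\rm Aut}(p)$ acts transitively on $p^{-1}(x)$, there is $\lambda \in {\rm Aut}(p)$ with $\lambda(y) = y'$. Lemma~\ref{coverings}(i) gives $\lambda\beta = \beta\lambda$, so
$$\beta(y') = \beta(\lambda(y)) = \lambda(\beta(y)) = \lambda(y) = y',$$
where the third equality uses $\beta(y) = y$. Since $y'$ was arbitrary in $p^{-1}(x)$, this shows $\beta$ restricts to the identity on the fiber, which is exactly the first assertion.

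For the ``Hence'' part I would argue as follows. Suppose $(y_0,y_1)$ and $(y_0,y_2)$, with all three points in $p^{-1}(x)$, lie in the same orbit of the diagonal ${\rm Inn}(\tilde{Q})$-action on pairs. By the definition of that action there is $\beta \in {\rm Inn}(\tilde{Q})$ with $\beta(y_0) = y_0$ and $\beta(y_1) = y_2$. The first equation says $\beta$ fixes $y_0 \in p^{-1}(x)$, so by the assertion just proved $\beta$ acts as the identity on all of $p^{-1}(x)$; in particular $\beta(y_1) = y_1$, since $y_1$ lies in the same fiber. Comparing with $\beta(y_1) = y_2$ yields $y_1 = y_2$, as required.

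I do not expect a genuine obstacle here: the only step needing care is invoking Lemma~\ref{coverings}(i) with the correct groups, namely $\lambda \in {\rm Aut}(p)$ and $\beta \in {\rm Inn}(\tilde{Q})$, and checking that the transitivity hypothesis really supplies the auxiliary $\lambda$ carrying the fixed point $y$ to an arbitrary element of the fiber. Everything beyond that is formal bookkeeping, and the argument does not require connectedness of $\tilde{Q}$, only the transitivity of the deck action on fibers.
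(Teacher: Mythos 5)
Your proof is correct and follows essentially the same route as the paper: both use the transitivity of ${\rm Aut}(p)$ on the fiber to produce a deck transformation carrying $y$ to an arbitrary $y'$, and then invoke Lemma~\ref{coverings}(i) to commute it past $\beta$. You also correctly read the paper's typo ``$\beta \in {\rm Inn}(Q)$'' as $\beta \in {\rm Inn}(\tilde{Q})$, and your explicit treatment of the ``Hence'' clause (which the paper leaves implicit) is the right formal deduction.
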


\begin{proof} Since ${\rm Aut}(p)$ acts transitively on each fiber,  for each $y' \in p^{-1}(x)$ there exist $\gamma \in {\rm Aut}(p)$ such that $\gamma(y) = y'$. Then since by Lemma~\ref{coverings} (i) $\beta$ and $\gamma$ commute we have $\beta(\gamma(y)) = \gamma(\beta(y)) = \gamma(y)$ and hence $\beta(y') = y'$ for all  $y' \in p^{-1}(x)$. 
\end{proof}

\begin{lemma} Let  $\tilde{Q}$ be a connected quandle and $p: \tilde{Q} \rightarrow Q$ be a covering.
If  $\Lambda$ is a subgroup of ${\rm Aut}(p)$ which acts transitively on each fiber
then $\Lambda = {\rm Aut}(p)$ and each fiber has cardinality $|\Lambda|$.
\end{lemma}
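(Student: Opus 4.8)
The plan is to derive both conclusions from Lemma~\ref{coverings}, using the standing assumption that all quandles and groups here are finite. The key observation is that a group acting both freely and transitively on a set has cardinality equal to that of the set, and that this can be combined with the injectivity bound from Lemma~\ref{coverings}(iii) to pin down $\Lambda$ exactly.

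First I would fix a point $x \in Q$ and examine the action of $\Lambda$ on the fiber $p^{-1}(x)$. Since $\tilde{Q}$ is connected and $\Lambda \subseteq {\rm Aut}(p)$, Lemma~\ref{coverings}(ii) tells us that $\Lambda$ acts freely on all of $\tilde{Q}$, hence freely on $p^{-1}(x)$. By hypothesis $\Lambda$ also acts transitively on $p^{-1}(x)$. A free and transitive action has trivial stabilizers and a single orbit, so the orbit--stabilizer theorem gives $|\Lambda| = |p^{-1}(x)| \cdot 1 = |p^{-1}(x)|$. Since $x$ was arbitrary, every fiber has cardinality $|\Lambda|$, which is the second assertion.

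For the first assertion I would run a squeezing argument on cardinalities. Fixing any $y_0 \in p^{-1}(x)$, Lemma~\ref{coverings}(iii) asserts that the evaluation map $\tau \colon {\rm Aut}(p) \to p^{-1}(x)$, $\phi \mapsto \phi(y_0)$, is injective, so $|{\rm Aut}(p)| \le |p^{-1}(x)|$. Combining this with the equality $|p^{-1}(x)| = |\Lambda|$ just established yields $|{\rm Aut}(p)| \le |\Lambda|$. On the other hand $\Lambda$ is a subgroup of ${\rm Aut}(p)$, so trivially $|\Lambda| \le |{\rm Aut}(p)|$. Hence $|\Lambda| = |{\rm Aut}(p)|$, and since both are finite and $\Lambda \subseteq {\rm Aut}(p)$, we conclude $\Lambda = {\rm Aut}(p)$.

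There is no serious obstacle here; the entire content is already packaged in Lemma~\ref{coverings}, and the proof is essentially a two-line combination of the free-transitive cardinality count with the injectivity bound. The only point requiring any care is the appeal to finiteness to upgrade the equality of orders $|\Lambda| = |{\rm Aut}(p)|$ into the set-theoretic equality $\Lambda = {\rm Aut}(p)$, which is legitimate given the blanket finiteness convention stated at the opening of this appendix.
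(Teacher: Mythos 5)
Your proposal is correct and follows essentially the same route as the paper: both arguments combine the freeness of the ${\rm Aut}(p)$-action from Lemma~\ref{coverings} with the transitivity hypothesis to get $|\Lambda| = |F|$ for each fiber $F$, and then use the injectivity of $\phi \mapsto \phi(y_0)$ to squeeze $|\Lambda| \le |{\rm Aut}(p)| \le |F| = |\Lambda|$ and conclude equality by finiteness. No gaps.
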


\begin{proof} By Lemma \ref{coverings} ${\rm Aut}(p)$ acts freely on each fiber $F$. By hypothesis $\Lambda$ and hence ${\rm Aut}(p)$ acts transitively on each fiber $F$, so if $y_0$ is any element of  $F$ then the mapping $\phi \mapsto \phi(y_0)$ is a bijection from ${\rm Aut}(p)$ to $F$.   If $\Lambda \neq {\rm Aut}(p)$ then we would have
for any fiber $|F| = |\Lambda| < |{\rm Aut}(p)| = |F|,$ a contradiction.
\end{proof}

\begin{lemma} \label{Inn} (Cf. Example 4.16 in \cite{Eis3})\label{extension} If ${\rm GAlex}(G,f)$ is a  generalized Alexander quandle 
and $\Lambda$ is a subgroup of ${\rm Fix}(G,f)$ then the mapping $p_\Lambda:{\rm GAlex}(G,f) \rightarrow {\mathscr H}(G,\Lambda,f)$ 
defined by $p_\Lambda(g) = \Lambda g$ is a covering,  $\Lambda = {\rm Aut}(p_\Lambda)$ and $\Lambda$ acts freely and transitively on each fiber of $p_\Lambda$.   Moreover $p_\Lambda$ is equivalent to the right translation mapping ${\rm inn} : {\rm GAlex}(G,f) \rightarrow {\rm inn}({\rm GAlex}(G,f))$  if and only if $\Lambda = {\rm Fix}(G,f)$.
\end{lemma}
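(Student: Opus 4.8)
The plan is to reduce everything to a single algebraic identity and then read off all four assertions from it. The identity I would prove first is: for $g_1,g_2 \in G$ the right translations of ${\rm GAlex}(G,f)$ satisfy $R_{g_1}=R_{g_2}$ if and only if $g_1 g_2^{-1}\in {\rm Fix}(G,f)$. To see this I would write $R_g(x)=f(xg^{-1})g$, substitute $y=xg_2^{-1}$ so that the variable decouples, and use that $f$ is a homomorphism; cancelling the common factor $f(y)$ (which ranges over all of $G$) collapses $R_{g_1}=R_{g_2}$ to $f(g_2 g_1^{-1})=g_2 g_1^{-1}$. In particular the fibers of ${\rm inn}:{\rm GAlex}(G,f)\to{\rm inn}({\rm GAlex}(G,f))$ are precisely the left cosets of ${\rm Fix}(G,f)$.

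Next I would dispatch the covering claim. The map $p_\Lambda$ is a surjective quandle homomorphism since $p_\Lambda(a*b)=\Lambda f(ab^{-1})b=\Lambda a*\Lambda b$, and its fibers are the left cosets $\Lambda g$. If $p_\Lambda(g_1)=p_\Lambda(g_2)$ then $g_1 g_2^{-1}\in\Lambda\subseteq{\rm Fix}(G,f)$, so the identity above gives $R_{g_1}=R_{g_2}$, which is exactly the covering condition. For the deck group, I would let $L_\lambda$ be left multiplication by $\lambda\in\Lambda$; a short computation using $f(\lambda)=\lambda$ shows $L_\lambda(a)*L_\lambda(b)=\lambda f(ab^{-1})b=L_\lambda(a*b)$, so $L_\lambda$ is a quandle automorphism, and $p_\Lambda L_\lambda=p_\Lambda$ makes it a deck transformation. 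Thus $\lambda\mapsto L_\lambda$ is an injective homomorphism $\Lambda\to{\rm Aut}(p_\Lambda)$, and on each fiber $\Lambda g$ it restricts to the left regular action of $\Lambda$, which is free and transitive. Since the generalized Alexander quandles of interest here are connected, the lemma immediately preceding this one (a subgroup of ${\rm Aut}(p)$ that is already transitive on fibers must equal ${\rm Aut}(p)$) then yields $\Lambda={\rm Aut}(p_\Lambda)$.

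For the equivalence criterion I would argue both directions by comparing fibers. If $\Lambda={\rm Fix}(G,f)$, then the fibers of $p_\Lambda$ and of ${\rm inn}$ coincide, being the left cosets of ${\rm Fix}(G,f)$; taking $i={\rm id}$ and defining $j(\Lambda g)=R_g$ gives, by the biconditional $\Lambda g=\Lambda g'\iff R_g=R_{g'}$, a well-defined bijection which is a quandle isomorphism ${\mathscr H}(G,\Lambda,f)\to{\rm inn}({\rm GAlex}(G,f))$ satisfying $j\circ p_\Lambda={\rm inn}$, so $p_\Lambda$ is equivalent to ${\rm inn}$. Conversely, an equivalence furnishes a bijection $i$ of ${\rm GAlex}(G,f)$ carrying each $p_\Lambda$-fiber onto an ${\rm inn}$-fiber, forcing $|\Lambda|=|{\rm Fix}(G,f)|$, and with $\Lambda\subseteq{\rm Fix}(G,f)$ this gives $\Lambda={\rm Fix}(G,f)$.

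I expect the only real content to be the translation identity of the first paragraph, which does all the work; the rest is organization. The one place demanding care is the ``only if'' direction of the equivalence: I must make precise that an equivalence of coverings, being built from bijections on the total spaces and base spaces, must preserve fiber cardinalities, so that the cardinality comparison $|\Lambda|=|{\rm Fix}(G,f)|$ is legitimate rather than merely suggestive. That bookkeeping, rather than any deep difficulty, is where I anticipate the main obstacle.
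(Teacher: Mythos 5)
Your proposal is correct and follows essentially the same route as the paper: the covering property via $g_1g_2^{-1}\in\Lambda\subseteq{\rm Fix}(G,f)\Rightarrow R_{g_1}=R_{g_2}$, deck transformations given by left multiplication by elements of $\Lambda$ acting regularly on the cosets, and the equivalence criterion settled by comparing fiber (equivalently, image) cardinalities. Your explicit isolation of the biconditional $R_{g_1}=R_{g_2}\iff g_1g_2^{-1}\in{\rm Fix}(G,f)$ and your appeal to the preceding lemma (under connectivity) to conclude $\Lambda={\rm Aut}(p_\Lambda)$ are, if anything, slightly more careful than the paper's own write-up, which leaves that last step implicit.
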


\begin{proof} If $p_\Lambda(a) = p_\Lambda (b)$ then $\Lambda a = \Lambda b$. It follows that $a b^{-1} \in \Lambda \subset {\rm Fix}(G,f)$. Hence $f(ab^{-1}) = ab^{-1}$, implying that 
$ f(b^{-1})b = f(a^{-1})a$
 and so for all $x \in {\rm GAlex}(G,f)$, 
 $$x*a = f(x)f(a^{-1})a = f(x)f(b^{-1})b = x*b. $$
 Thus $p_\Lambda$ is a covering.  Clearly $p_\Lambda^{-1}(\Lambda g) = \Lambda g$,  the right coset of $\Lambda$ in $G$ containing $g$.  Define an action of $\Lambda$ on ${\rm GAlex}(G,f)$ by setting for $\lambda \in \Lambda$, $\lambda g $ to be the product in $G$.  Now  using the fact that $f(\lambda) = \lambda$, we have 
 $$\lambda(a*b) = \lambda f(ab^{-1})b =
f(\lambda)f(a)f(b^{-1})f(\lambda^{-1})\lambda b = f(\lambda a (\lambda b)^{-1})\lambda b =
(\lambda a)*(\lambda b). $$
 Thus this action of $\lambda$ is an automorphism of ${\rm GAlex}(G,f)$.  Since $ \Lambda $ is a subgroup, clearly it acts freely and transitively on any fiber $\Lambda g$ of $p_\Lambda$.  Now for $\Lambda = {\rm Fix}(G,f)$ define
$\tau: {\rm  inn} ({\rm GAlex}(G,f)) \rightarrow {\mathscr H}(G,\Lambda,f)$ by $\tau(R_g) = \Lambda g$. One easily checks that $\tau$ is  well-defined and bijective. Note that the product in ${\rm Inn}({\rm GAlex}(G,f))$ is given by conjugation and $R_a*R_b = R_{a*b}$ so it suffice to observe that 
$$\tau( R_a*R_b) = R_{a*b}= \Lambda f(ab^{-1})b = \Lambda a*\Lambda b.$$
  Note that $|{\rm Image}({\rm inn}) | = |G|/|{\rm Fix}(G,f)|$ but for any proper subgroup $\Lambda$ of ${\rm Fix}(G,f)$,   ${\rm Image}(p_\Lambda) = |G|/|\Lambda| >  {\rm  |Image}({\rm inn}) |.$
This shows that only for $\Lambda = {\rm Fix}(G,f)$ is $p_\Lambda$ equivalent to ${\rm inn}$.
\end{proof}

\begin{theorem}[Theorem~\ref{Extension}] 
 If $\Lambda$ is a subgroup of ${\rm Fix}(G,f)$ where $f \in {\rm Aut}(G)$ then  $${\rm GAlex}(G,f) \cong  \Lambda \times_\phi {\mathscr H}(G,\Lambda,f)$$ and the projection $\pi : \Lambda \times_\phi {\mathscr H}(G,\Lambda,f)  \rightarrow {\mathscr H}(G,\Lambda,f)$ given by $(\lambda,\Lambda g) \mapsto \Lambda g$ is equivalent to $p_\Lambda$ as defined in Lemma~\ref{extension}. Moreover if $\Lambda = {\rm Fix}(G,f)$ then $\pi$ is equivalent to {\rm inn}.
\end{theorem}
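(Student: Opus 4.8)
The plan is to obtain the theorem by assembling the two structural results already in hand, namely Lemma~\ref{extension} and the extension criterion Theorem~\ref{ExtCriterion}, and then to verify a single commuting square to upgrade the abstract isomorphism to an \emph{equivalence} of projections. I would set $\tilde{Q} = {\rm GAlex}(G,f)$, $Q = {\mathscr H}(G,\Lambda,f)$, and $p = p_\Lambda$ throughout, so that the data of the theorem is exactly the data to which these earlier results apply.

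First I would invoke Lemma~\ref{extension}: since $\Lambda \le {\rm Fix}(G,f)$, the map $p_\Lambda : {\rm GAlex}(G,f) \rightarrow {\mathscr H}(G,\Lambda,f)$, $g \mapsto \Lambda g$, is a covering, and $\Lambda = {\rm Aut}(p_\Lambda)$ acts freely and transitively on each fiber $\Lambda g$. This is precisely the hypothesis of Theorem~\ref{ExtCriterion} applied to $p_\Lambda$, so that theorem yields a cocycle $\phi$ and a quandle isomorphism $F : \Lambda \times_\phi {\mathscr H}(G,\Lambda,f) \rightarrow {\rm GAlex}(G,f)$, given in its proof by $F(\lambda, \Lambda a) = \lambda\, s(\Lambda a)$ for a chosen section $s$ of $p_\Lambda$. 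This already establishes the first assertion, the isomorphism ${\rm GAlex}(G,f) \cong \Lambda \times_\phi {\mathscr H}(G,\Lambda,f)$.

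The step that needs actual checking, and which I expect to be the only real point of the argument, is that this $F$ intertwines the projection $\pi$ with $p_\Lambda$, i.e.\ that $\pi$ is \emph{equivalent} to $p_\Lambda$ and not merely that the total spaces are isomorphic. I would take $i = F$ and $j = \mathrm{id}_{{\mathscr H}(G,\Lambda,f)}$ in the definition of equivalence and verify the square $p_\Lambda \circ F = \pi$ directly. The computation is short: for $(\lambda, \Lambda a)$ one has $p_\Lambda(F(\lambda, \Lambda a)) = \Lambda\big(\lambda\, s(\Lambda a)\big) = \Lambda\, s(\Lambda a) = \Lambda a$, using $\lambda \in \Lambda$ for the middle equality and $s(\Lambda a) \in p_\Lambda^{-1}(\Lambda a) = \Lambda a$ for the last; and $\pi(\lambda, \Lambda a) = \Lambda a$ by definition. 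Hence $p_\Lambda \circ F = \pi$, so the required square commutes and $\pi$ is equivalent to $p_\Lambda$, giving the second assertion.

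For the final clause, I would argue by transitivity of the equivalence relation on quandle homomorphisms. When $\Lambda = {\rm Fix}(G,f)$, the last sentence of Lemma~\ref{extension} gives that $p_\Lambda$ is equivalent to ${\rm inn}$; composing the commuting squares that witness $\pi \sim p_\Lambda$ and $p_\Lambda \sim {\rm inn}$ (multiplying the respective vertical isomorphisms) produces a commuting square witnessing $\pi \sim {\rm inn}$. This completes the proof, with all substance delegated to Lemma~\ref{extension} and Theorem~\ref{ExtCriterion} and only the one coset calculation above carried out by hand.
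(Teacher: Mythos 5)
Your proposal is correct and follows essentially the same route as the paper: both apply Theorem~\ref{ExtCriterion} to the covering $p_\Lambda$ furnished by Lemma~\ref{extension}, verify the identity $p_\Lambda \circ F = \pi$ by the same one-line coset computation, and deduce the final clause from the last statement of Lemma~\ref{extension}. No gaps.
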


\begin{proof} In the notation of Theorem~\ref{ExtCriterion}, the section 
$s: {\mathscr H}(G,\Lambda,f) \rightarrow {\rm GAlex}(G,f)$
 is given by  letting $s(\Lambda g)$ be any representative
of the coset $\Lambda g.$ Hence we have $\Lambda g = \Lambda s(\Lambda g)$. 
Then the isomorphism 
$f:\Lambda \times_\phi {\mathscr H}(G,\Lambda,f) \rightarrow {\rm GAlex}(G,f)$ 
is given by $f((\lambda,\Lambda g) )= \lambda s(\Lambda g)$.
 Then
$$p_\Lambda(f((\lambda,\Lambda g))) =
 p_\Lambda (\lambda s(\Lambda g))) =
\Lambda \lambda s(\Lambda g) =
\Lambda g =
\pi ((\lambda,\Lambda g)).$$
Hence $p_\Lambda f = \pi$ and so $p_\Lambda$ is equivalent to $\pi.$  The last statement
follows from Lemma~\ref{Inn}.
\end{proof}

\begin{remark} 
{\rm
From Lemma~\ref{extension} if ${\rm GAlex}(G,f)$ is not faithful, then ${\rm Fix}(G,f)$ must be non-trivial and since any non-trivial group contains a non-trivial abelian subgroup, by  Theorem~\ref{Extension} the quandle ${\rm GAlex}(G,f)$ is always an abelian extension of some quandle. However, ${\rm inn}: {\rm GAlex}(G,f) \rightarrow {\rm inn}({\rm GAlex}(G,f))$ need not be an abelian extension as we see using Example~\ref{example} and the following lemma.
}
\end{remark}

\begin{lemma}\label{GAlexAbelian} Let ${\rm GAlex}(G,f)$ be connected. If ${\rm GAlex}(G,f) \cong \Lambda \times_{\phi} Q$  where $|\Lambda|=|{\rm Fix}(G,f)|$ then $\Lambda \cong {\rm Fix}(G,f)$ as groups.  Hence if ${\rm Fix}(G,f)$ is not abelian then
 $\Lambda \times_{\phi} Q$ is not an abelian extension of $Q.$
\end{lemma}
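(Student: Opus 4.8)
The plan is to translate the statement into a computation of the centralizer of ${\rm Inn}(\tilde{Q})$, where $\tilde{Q} = {\rm GAlex}(G,f)$. First I would record the deck-transformation data carried by the given isomorphism. Writing $\tilde{Q}\cong \Lambda\times_\phi Q$, the second-coordinate projection $\pi:\Lambda\times_\phi Q\to Q$ is a covering, and $\Lambda$ acts on $\Lambda\times_\phi Q$ by $\lambda'\cdot(\lambda,a)=(\lambda'\lambda,a)$, freely and transitively on each fiber, through quandle automorphisms lying in ${\rm Aut}(\pi)$. Since this image of $\Lambda$ acts transitively on the fibers of $\pi$, the lemma asserting that a subgroup of ${\rm Aut}(\pi)$ transitive on fibers must equal ${\rm Aut}(\pi)$ gives ${\rm Aut}(\pi)\cong\Lambda$. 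Transporting $\pi$ through the isomorphism produces a covering $p:\tilde{Q}\to Q$ with ${\rm Aut}(p)\cong{\rm Aut}(\pi)\cong\Lambda$. Thus it suffices to prove ${\rm Aut}(p)\cong {\rm Fix}(G,f)$.

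By Lemma~\ref{coverings}(i) every deck transformation commutes with all of ${\rm Inn}(\tilde{Q})$, so ${\rm Aut}(p)$ is contained in the centralizer $C:=C_{{\rm Sym}(\tilde{Q})}({\rm Inn}(\tilde{Q}))$. The core of the argument is then to show $C\cong{\rm Fix}(G,f)$, and in particular $|C|=|{\rm Fix}(G,f)|$; granting this, the hypothesis $|\Lambda|=|{\rm Fix}(G,f)|$ forces the containment ${\rm Aut}(p)\subseteq C$ to be an equality of finite groups of the same order, whence $\Lambda\cong{\rm Aut}(p)=C\cong{\rm Fix}(G,f)$.

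To compute $C$, I would use the explicit form of the right translations on $\tilde{Q}$. Writing $t_b=f(b)^{-1}b$ and $\rho_c$ for right multiplication by $c$ in $G$, one has $R_b(a)=f(a)f(b)^{-1}b$, that is $R_b=\rho_{t_b}\circ f$; in particular $R_1=f$. Connectedness of $\tilde{Q}$ is equivalent to $G=\langle t_b : b\in G\rangle$, since the ${\rm Inn}$-orbit of the identity is exactly this subgroup. Now let $\gamma\in C$. From $\gamma R_1=R_1\gamma$ we get $\gamma f=f\gamma$, and combining this with $\gamma R_b=R_b\gamma$ and $R_b=\rho_{t_b}f$ yields $\gamma\rho_{t_b}=\rho_{t_b}\gamma$ for every $b$. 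Since the $t_b$ generate $G$, the permutation $\gamma$ centralizes the entire right regular representation $\rho(G)$; as the centralizer of the right regular representation in ${\rm Sym}(G)$ is the left regular representation, we obtain $\gamma=L_c$ for some $c\in G$, where $L_c(a)=ca$. Finally $\gamma f=f\gamma$ reads $cf(a)=f(c)f(a)$ for all $a$, i.e. $c\in{\rm Fix}(G,f)$. Conversely each such $L_c$ is an automorphism commuting with every $R_b$ (as already used in the proof of Lemma~\ref{extension}), so $C=\{L_c:c\in{\rm Fix}(G,f)\}\cong{\rm Fix}(G,f)$.

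Combining the three steps completes the proof, and the final assertion is then immediate: if ${\rm Fix}(G,f)$ is non-abelian then $\Lambda\cong{\rm Fix}(G,f)$ is non-abelian, so $\Lambda\times_\phi Q$ cannot be an abelian extension. I expect the main obstacle to be the centralizer computation — specifically, justifying that $\gamma$ must centralize the full right regular representation (which is where connectedness enters through $G=\langle t_b\rangle$) and then invoking the identification of the centralizer of the right regular representation with the left regular representation; the deck-transformation bookkeeping in the first two paragraphs is routine given the lemmas already available.
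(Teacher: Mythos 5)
Your argument is correct, and it diverges from the paper's proof at the decisive step. The opening bookkeeping is the same in both: transport the projection $\pi$ through the given isomorphism to a covering $p:{\rm GAlex}(G,f)\rightarrow Q$, and use the free transitive fiber action of $\Lambda$ together with Lemma~\ref{coverings} to get ${\rm Aut}(p)\cong{\rm Aut}(\pi)\cong\Lambda$ and fibers of size $|\Lambda|$. After that the paper pins down the \emph{fibers}: if $p(h)=p(g)$ then $R_h=R_g$, which unwinds to $f(gh^{-1})=gh^{-1}$, so each fiber sits inside a coset of ${\rm Fix}(G,f)$; the cardinality hypothesis upgrades this to equality of fibers with cosets, and then ${\rm Aut}(p)={\rm Fix}(G,f)$ follows from Lemma~\ref{extension}. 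You instead pin down the \emph{deck group directly}, by computing the full centralizer $C$ of ${\rm Inn}(\tilde{Q})$ in ${\rm Sym}(\tilde{Q})$: writing $R_b=\rho_{t_b}\circ f$ with $t_b=f(b)^{-1}b$, commutation with $R_1=f$ and with all $R_b$ forces commutation with the right regular representation of $\langle t_b\rangle$, which equals $G$ by connectedness, so $C$ is the left regular copy of ${\rm Fix}(G,f)$; the cardinality hypothesis then forces ${\rm Aut}(p)=C$. Both routes are sound and use connectedness and the hypothesis $|\Lambda|=|{\rm Fix}(G,f)|$ in the same way (to promote a containment of equal-sized finite sets to an equality). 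The paper's fiber argument is shorter and needs no generation statement; yours buys a slightly stronger structural fact for free, namely that the deck group of \emph{any} covering with connected total space ${\rm GAlex}(G,f)$ embeds in ${\rm Fix}(G,f)$, since it must lie in $C$. The only point worth tightening is your parenthetical claim that the ${\rm Inn}$-orbit of the identity is \emph{exactly} $\langle t_b : b\in G\rangle$; you only need the containment of the orbit in that subgroup (which is immediate since each $R_b^{\pm 1}$ preserves it), and that suffices to conclude $G=\langle t_b\rangle$ from connectedness.
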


\begin{proof} Let $\varphi: {\rm GAlex}(G,f) \rightarrow \Lambda \times_{\phi} Q$  be an isomorphism
and let $\pi: \Lambda \times_{\phi} Q \rightarrow Q$
be the projection from $\Lambda \times_{\phi} Q$ to $Q$. Then $p = \pi \varphi : {\rm GAlex}(G,f) \rightarrow Q$ is a covering
which by definition is isomorphic to the covering $\pi$ in the category of coverings of $Q$ (\cite{Eis3} ).
It follows that the groups ${\rm Aut}(p)$ and ${\rm Aut}(\pi)$ are isomorphic and the cardinality of the fibers of $p$ and $\pi$ are equal.  Clearly ${\rm Aut}(\pi) \cong \Lambda$. Now for the covering $p:{\rm GAlex}(G,f) \rightarrow Q$ we have for $g \in {\rm GAlex}(G,f)$,  $h \in p^{-1}(p(g))$, $p(h) = p(g)$ and hence $R_h = R_g$. This implies that $f(xg^{-1})g = f(xh^{-1})h$ for all $x \in G$ and hence
$f(g^{-1})g = f(h^{-1})h$ and $f(gh^{-1}) = gh^{-1}$, thus $h \in {\rm Fix}(G,f)g$. So $p^{-1}(p(g)) \subseteq {\rm Fix}(G,f)g$. Since  $|p^{-1}(p(g))| = |{\rm Fix}(G,f)g|$ we must have $p^{-1}(p(g)) = {\rm Fix}(G,f)g$ and it follows that 
${\rm Aut}(p) = {\rm Fix}(G,f)$. This proves that $\Lambda \cong {\rm Fix}(G,f)$.  
\end{proof}

\begin{remark} \label{EisQuandles} 
{\rm
The quandles ${\rm GAlex}(G,f)$ have an alternative description as described by Eisermann \cite{Eis1}. Also compare Corollary~\ref{minG} above.
Namely, let $\tilde{G}$ be a group with $x \in \tilde{G}$. If $G$ is the derived subgroup of $\tilde{G}$ then $f(g) = x^{-1}gx$, $g \in G$, defines an automorphism of $G$. Then by  Lemma 25 of \cite{Eis1} together with Remark 27 of the same paper, ${\rm GAlex}(G,f)$ is connected if $\tilde{G}$ is generated as a group by the conjugacy class
$x^{\tilde{G}}$  of $x$ in $\tilde{G}$. There it is proved that $x^G = x^{\tilde{G}}$ and it is easy to check that the mapping  $g: {\rm GAlex}(G,f) \rightarrow x^G$ defined by $g(a)=  a^{-1}x a$ for $a \in G$ is a covering equivalent to ${\rm inn}.$   Note that the product in $x^G$ is conjugation.
}
\end{remark}

\begin{remark} \label{construction}
{\rm 
The method of constructing up to isomorphism all quandles of the form ${\rm GAlex}(G,f)$ via the method in Remark~\ref{EisQuandles} is inefficient since in general there are many groups $\tilde{G}$ whose derived subgroup is a given group $G$. Thus using such a method makes isomorphism testing difficult. On the other hand using Lemma~\ref{Hou} and GAP it is easy to find for each small group $G$ representatives $f$ of the conjugacy classes of ${\rm Aut}(G)$ to construct distinct quandles of the form ${\rm GAlex}(G,f)$. One only needs to check connectivity which is easily done directly and to compute the group ${\rm Fix}(G,f)$ to determine whether ${\rm GAlex}(G,f)$ is an abelian extension.
}
\end{remark}

\begin{example} \label{example} 
{\rm
Using Remark~\ref{EisQuandles} we give a simple class of connected extensions   $ \Lambda \times_\phi Q$ where the projection $\pi:  \Lambda \times_\phi Q \rightarrow Q$ is equivalent to ${\rm inn}$
and $\Lambda$ is a non-abelian group. Let $n \geq 5$ and let $x=(1 \ 2) \in S_n$.  Let  $f: A_n \rightarrow A_n$ be defined by  
$f(g) = x g x$, $g \in A_n$. $f$ is an automorphism of $A_n$ since $A_n$ is a normal subgroup of $S_n$.
By Theorem \ref{Extension} it suffices to note that $\Lambda = {\rm Fix}(A_n,f)$ is not abelian.
For example let $\alpha = (1  2) (3 \ 4 )$ and $\beta = (1 \ 2)(3 \ 5)$ clearly $\alpha, \beta \in {\rm Fix}(A_n,f)$
but $\alpha \beta(3) = 4$ and $\beta \alpha(3) = 5 $, so $\alpha$ and $\beta$ do not commute. In fact, in this case, $\Lambda \cong S_{n-2}$.
We also note that the image of the covering $p_\Lambda: {\rm GAlex}(A_n,f) \rightarrow {\mathscr H}(A_n,\Lambda,f)$ as defined in Lemma~\ref{extension}.
is isomorphic to the conjugation quandle on the conjugacy class consisting of all transpositions in $S_n$.
 Since $S_n$ is generated by all  transpositions,  by Remark~\ref{EisQuandles} the quandles ${\rm GAlex}(A_n,f)$ are connected.
 
}
\end{example}

\subsection*{Acknowledgements}
MS was partially supported by
NIH R01GM109459.

\end{document}